\numberwithin{equation}{section}
\newtheorem{theor}{Theorem}[section]
\newtheorem{cor}[theor]{Corollary}
\newtheorem{lemma}[theor]{Lemma}
\newtheorem{prop}[theor]{Proposition}
\newcounter{other}            
\newtheorem{otherth}[other]{Theorem}              
\def \Hu{\mathcal{H}_\mu}
\def \D{\mathbb{D}}
\def \N{\mathbb{N}}
\def \T{\mathbb{T}}
\def \d{\mathcal D}
\def \H{\mathcal{H}}
\def \C {\mathcal C}
\def  \bm {\boldsymbol}
\def \X {\mathcal{X}}
\def \Ce{\mathcal{C}_{\bm{\eta}}}
\begin{document}
\title[Hankel matrices]
{Hankel matrices acting on the Dirichlet space}

\author{ Guanlong Bao, Kunyu Guo, Fangmei Sun and Zipeng Wang }

\address{Guanlong Bao\\
    Department of Mathematics, Shantou University\\
    Shantou, Guangdong, 515821, China}
\email{glbao@stu.edu.cn}

\address{Kunyu Guo\\
    School of Mathematical Sciences, Fudan University\\
    Shanghai, 200433, China}
\email{kyguo@fudan.edu.cn}

\address{Fangmei Sun\\
    School of Mathematical Sciences, Fudan University\\
    Shanghai, 200433, China}
\email{fmsun@foxmail.com}

\address{Zipeng Wang\\
    College of Mathematics and Statistics, Chongqing University\\
    Chongqing, 401331, China}
\email{zipengwang2012@gmail.com}

\subjclass[2010]{47B35,  30H20, 46E15}
\keywords{Hankel matrix, Ces\`aro matrix, Carleson measure, Dirichlet space, Bergman space}

\begin{abstract}
The characterization of the boundedness of  operators induced by  Hankel matrices on analytic function spaces can be traced back to the work of Z. Nehari  and H. Widom on the Hardy space, and has been extensively studied on many other analytic function spaces  recently. However, this  question  remains open in the context of the  Dirichlet space \cite{GGMM}. By  Carleson measures, the Widom type condition and the reproducing kernel thesis, this paper provides a comprehensive solution to this   question. As a beneficial product, characterizations  of  the  boundedness and compactness  of operators induced by  Ces\`aro  type matrices on the Dirichlet space are   given. In addition, we also show  that a random Dirichlet function  almost surely induces  a compact  Hankel type operator on the Dirichlet space.
\end{abstract}

\maketitle

\section{Introduction}

Let $\mathbb{N}$ be the set of nonnegative integers. Suppose $\boldsymbol{\lambda}=\{\lambda_n\}_{n\in\mathbb{N}}$ is a sequence of complex numbers. An infinite Hankel matrix $\bm{H}_{\boldsymbol{\lambda}}$ induced by the sequence $\boldsymbol{\lambda}$ is given by
$
\bm{H}_{\boldsymbol{\lambda}}=(\lambda_{j+k})_{j,k\in\mathbb{N}}.
$
The Hankel matrix $\bm{H}_{\boldsymbol{\lambda}}$ is initially defined for all finitely supported sequences in $\ell^2$. The celebrated Nehari theorem \cite{Ne} illustrates that $\bm{H}_{\boldsymbol{\lambda}}$ represents a bounded operator on $\ell^2$ if and only if there exists a function $\psi$ in $L^\infty$ on the unit circle $\mathbb{T}$ such that $\lambda_n$, $n\geq 0$, is the $n$-th Fourier coefficient of $\psi$.

For $f(z)=\sum_{k=0}^\infty a_kz^k$ in $H(\D)$, the class of functions analytic in the open unit disk $\D$, the Hankel matrix $\bm{H}_{\boldsymbol{\lambda}}$ acts on the function  $f$ via
\begin{equation*}\label{Hop}
\H_{\boldsymbol{\lambda}}(f)(z):=\sum_{n=0}^\infty\left(\sum_{k=0}^\infty \lambda_{n+k}a_k\right)z^n, \ \ z\in \D,
\end{equation*}
whenever the right-hand side makes sense and defines an analytic function in $\D$.  For a space $X\subseteq H(\D)$  equipped with a norm $\|\cdot\|_X$, the Hankel type operator $\H_{\boldsymbol{\lambda}}$
is bounded on $X$ if $\H_{\boldsymbol{\lambda}}(f)$ is well defined in $H(\D)$ for any $f\in X$ and there exits a positive constant $C$ independent of $g$ such that $\|\H_{\boldsymbol{\lambda}}(g)\|_X \leq C \|g\|_X$ for all $g$ in $X$.

 By the Fourier transform,  the  Hankel matrix $\bm{H}_{\boldsymbol{\lambda}}$ represents a bounded operator on  $\ell^2$  if and only if $\H_{\boldsymbol{\lambda}}$ is bounded on the Hardy space $H^2$. See \cite{Pel} for details.

When ${\boldsymbol{\lambda}}$ is the moment sequence $\{\mu_n\}_{n\in\mathbb{N}}$ of a finite positive Borel measure $\mu$ on  $[0, 1)$, where $\mu_n=\int_{[0, 1)}t^n d\mu(t)$,   the related Hankel matrix is denoted by $\bm{H}_\mu$ in the literature. Write $\H_\mu$ for the corresponding Hankel type operator.
H. Widom \cite{Wi} proved that $\H_\mu$ is bounded on $H^2$ if and only if $\mu_j=O(\frac{1}{j+1})$.
It is worth noting that $(\frac{1}{j+1})_{j\in\mathbb{N}}$ is the moment sequence of the Lebesgure measure on $[0,1)$, which corresponds to the classical Hilbert matrix $\bm{H}=((j+k+1)^{-1})_{j,k\in\mathbb{N}}$. See \cite{BK, D2, DA, DJV, Ka, Ka1, LMN, LMW, LMW1, LNP, YF} for developments of the Hilbert matrix acting on analytic function spaces.

Since then, operator questions regarding the Hankel matrix have been investigated in various other function spaces of analytic functions on the unit disk, including Hardy spaces, weighted Bergman spaces, Dirichlet type spaces, and Möbius invariant function spaces. In \cite{D1},  E. Diamantopoulos' work  illustrates   that H. Widom's condition remains true for the Dirichlet-type space $\mathcal{D}_\alpha$ with $0<\alpha<2$. He also noted that the boundedness of  $\H_\mu$ on the  classical Dirichlet space $\d$ and the Bergman space $A^2$ coincides. Under a milder condition, using Carleson measures for $\d$, P. Galanopoulos and J. Pel\'aez \cite{GP} completely characterized  the boundedness of $\H_\mu$ on the Dirichlet space.   C. Chatzifountas, D. Girela and J. Pel\'aez  studied the boundedness and compactness of $\H_\mu$ between distinct Hardy spaces in \cite{CGP}. D. Girela and  N. Merch\'an \cite{GM} developed a method to give complete descriptions  of the boundedness and compactness of $\H_\mu$  on some Hardy spaces and M\"obius invariant function spaces. G. Bao, F. Ye and K. Zhu \cite{BYZ} considered bounded  $\H_\mu$ on analytic functions spaces in terms of so-called Hankel measures.

In their recent work \cite{GGMM}, P. Galanopoulos, D. Girela, A. Mas, and N. Merch\'an demonstrated  that a 1-logarithmic 1-Carleson measure ensures the boundedness of $\H_\mu$ on the Dirichlet space. However, they also pointed out that  the characterization of the  finite positive Borel measure $\mu$ on  $[0, 1)$  for which the operator  $\H_\mu$  is bounded on the Dirichlet space remains unresolved.
Via  Carleson measures, Widom type conditions and the reproducing kernel thesis, the current work  answers this question.

Recall that  the  Dirichlet space  $\mathcal D$  is a Hilbert space of analytic functions on $\D$ equipped  with the Dirichlet inner product
\begin{equation}\label{inner product D}
\langle f, g\rangle_{\mathcal D}=f(0)\overline{g(0)}+\int_\D f'(z) \overline{g'(z)} dA(z).
\end{equation}
Here $dA(z)=\pi^{-1}dxdy$ for $z=x+iy$ is the normalized Lebesgue  measure on $\D$.  A  finite positive  Borel measure  $\nu$ on $\D$ is said to be a Carleson measure for the Dirichlet space if   there is a positive constant  $C$ such that
\begin{equation}\label{CM for diri}
\left(\int_\D |f(z)|^2 d\nu(z)\right)^{\frac 12}\leq C \|f\|_\d
\end{equation}
for all $f\in \d$. Denote by $CM(\d)$ the set of Carleson measures for $\d$.  The smallest such  $C$ in (\ref{CM for diri}) is denoted by $\|\nu\|_{CM(\d)}$, the Carleson measure norm of $\nu$. If the identity map  $I_d: \d\to L^2(\D, d\nu)$ is compact, then we say that $\nu$ is a vanishing Carleson measure for $\d$. See \cite{ARS, arswbook, EKMR, GP, S, Wu0} for this  definition of  Carleson measures for $\d$ and related investigations.

In this paper, we first obtain the following result.
\begin{theor}\label{main00}
 Suppose $\boldsymbol{\lambda}=\{\lambda_n\}_{n\in\mathbb{N}}$ is a sequence of complex numbers. Then the Hankel type operator  $\H_{\bm\lambda}$ is bounded on the Dirichlet space $\mathcal{D}$ if and only if $h_{\overline{\bm \lambda}}$ is analytic on  $\mathbb{D}$ and the measure $|h_{\overline{{\bm \lambda}}}'(z)|^2dA(z)$ is a Carleson measure for the Dirichlet space $\d$, where $h_{\overline{{\bm \lambda}}}(z)=\sum_{n=0}^\infty \overline{\lambda_n}z^n$.
\end{theor}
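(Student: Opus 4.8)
\emph{Plan of proof.} The idea is to recast boundedness of $\H_{\bm\lambda}$ on $\d$ as a single bilinear estimate, after which one half is trivial and all the difficulty is concentrated in the other. Write $g=h_{\overline{\bm\lambda}}$, so that $\widehat g(n)=\overline{\lambda_n}$; note at the outset that ``$g$ analytic on $\D$ with $|g'|^2\,dA\in CM(\d)$'' is nothing but boundedness on $\d$ of the integration operator $T_gf(z)=\int_0^z f(t)g'(t)\,dt$, since $T_gf(0)=0$ and $\|T_gf\|_\d^2=\int_\D|f|^2|g'|^2\,dA$ --- this is the link to the existing theory of Carleson measures for $\d$. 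The engine of the whole argument is the elementary identity
\begin{equation}\label{key}
\big\langle(\H_{\bm\lambda}f)',\Psi\big\rangle_{A^2}=\int_\D f(z)\,\overline{g'(z)}\,\widetilde\Psi(z)\,dA(z),\qquad\widetilde\Psi(z):=\overline{\Psi(\bar z)},
\end{equation}
valid for $f$ a polynomial and every $\Psi\in A^2$: both sides equal $\sum_{k,m\ge0}a_k\,\overline{\widehat\Psi(m)}\,\lambda_{k+m+1}$ when $f=\sum a_kz^k$ and $\Psi=\sum\widehat\Psi(m)z^m$. Together with the trivial $\H_{\bm\lambda}f(0)=\langle f,g\rangle_{H^2}$, the relation $\|F\|_\d^2=|F(0)|^2+\|F'\|_{A^2}^2$, and the fact that $\Psi\mapsto\widetilde\Psi$ is a conjugate-linear isometric bijection of $A^2$, taking the supremum over $\|\Psi\|_{A^2}\le1$ in \eqref{key} reduces Theorem \ref{main00} to the statement
$$
\H_{\bm\lambda}\ \text{is bounded on}\ \d\iff\sup\Big\{\Big|\int_\D f\,\Phi\,\overline{g'}\,dA\Big|:\ \|f\|_\d\le1,\ \|\Phi\|_{A^2}\le1\Big\}<\infty.\qquad(\star)
$$
(The value at the origin is harmless: taking $f\equiv1$ in $(\star)$ and letting $\Phi$ range over $A^2$ forces $g'\in A^2$, i.e.\ $g\in\d\subset H^2$, and then $|\H_{\bm\lambda}f(0)|\le\|f\|_{H^2}\|g\|_{H^2}\lesssim\|f\|_\d$; moreover the coefficients $c_n$ of $\H_{\bm\lambda}f$ satisfy $|c_n|=|\langle z^nf,g\rangle_{H^2}|\le\|f\|_{H^2}\|g\|_{H^2}$, so $\H_{\bm\lambda}f\in H(\D)$ genuinely.)

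Once the problem is in the form $(\star)$, sufficiency is immediate. If $|g'|^2\,dA\in CM(\d)$ then, by Cauchy--Schwarz and the Carleson embedding for $\d$,
$$
\Big|\int_\D f\,\Phi\,\overline{g'}\,dA\Big|\le\Big(\int_\D|f|^2|g'|^2\,dA\Big)^{1/2}\|\Phi\|_{A^2}\lesssim\|f\|_\d\,\|\Phi\|_{A^2},
$$
which is $(\star)$; a routine approximation of $f\in\d$ by polynomials, using \eqref{key}, then upgrades this to $\|\H_{\bm\lambda}f\|_\d\lesssim\|f\|_\d$ and shows $\H_{\bm\lambda}f\in\d$.

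The real content is the converse implication in $(\star)$: the bilinear (Hankel-form) bound on $\d\times A^2$ must be promoted to the quadratic Carleson bound $\int_\D|f|^2|g'|^2\,dA\lesssim\|f\|_\d^2$. Dualizing, $(\star)$ says precisely that $f\mapsto P(\bar f g')$ is bounded from $\d$ to $A^2$ ($P$ the Bergman projection), so one must ``remove the projection'' --- which fails for a generic bounded bilinear form and can succeed here only because the density $|g'|^2$ of the measure is the squared modulus of an analytic function. Following the three ingredients announced in the abstract, the plan is: (i) use \eqref{key} and a square-function/atomic description of $\d$ to establish a reproducing kernel thesis for $\H_{\bm\lambda}$, so that boundedness on $\d$ is equivalent to $\sup_{a\in\D}\|\H_{\bm\lambda}\widehat K_a\|_\d<\infty$, where $\widehat K_a=K_a/\|K_a\|_\d$ and $K_a(z)=1+\log\frac1{1-\bar az}$; (ii) evaluate \eqref{key} on these kernels to convert that uniform bound into a Widom-type growth condition on the sequence $\bm\lambda$ (equivalently, a box/capacity-type control of $\int_{S(I)}|g'|^2\,dA$ over Carleson boxes); and (iii) invoke the known characterization of Carleson measures for $\d$ of the special form $|g'|^2\,dA$ --- equivalently, of boundedness of $T_g$ on $\d$ --- to conclude $|g'|^2\,dA\in CM(\d)$. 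The main obstacle is steps (ii)--(iii): Carleson measures for $\d$ are not detected by a single-box test in general, so one must exploit the analyticity of $g'$, via the Widom-type reformulation and a summation over the dyadic annuli $\{1-2^{-k}\le|z|<1-2^{-k-1}\}$, to show that for measures $|g'|^2\,dA$ the reproducing-kernel test is after all sufficient --- and it is exactly there that Carleson measures, the Widom condition, and the reproducing kernel thesis are played off against one another.
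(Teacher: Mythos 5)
Your reduction to the bilinear estimate $(\star)$ is correct and is essentially the paper's identity (\ref{b1}), and your treatment of the sufficiency direction (Cauchy--Schwarz plus the Carleson embedding for $\d$, preceded by the coefficient bound $|c_n|\le\|f\|_{H^2}\|g\|_{H^2}$ to make sense of $\H_{\bm\lambda}f$ as an analytic function) matches the paper's argument. The genuine gap is in your plan for the converse, i.e.\ for promoting $(\star)$ to $\int_\D|f|^2|g'|^2\,dA\lesssim\|f\|_\d^2$. Your steps (i)--(iii) route this through a reproducing kernel thesis and a Widom-type condition on $\bm\lambda$, but for a \emph{general complex} sequence neither is available: the condition $\sum_{n\ge m}n|\lambda_n|^2=O(1/\log(m+2))$ is not necessary for boundedness once positivity and monotonicity are dropped. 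The paper's own Corollary \ref{rudin} shows that for $\bm\lambda$ supported on a lacunary (or Rudin $\Lambda(p)$) set, $\H_{\bm\lambda}$ is bounded as soon as $\sum_n n|\lambda_n|^2<\infty$, which does not imply the Widom decay; and testing on the kernels $k_t$ only extracts the Widom condition when there is no cancellation, which is exactly where the hypothesis that $\{\lambda_n\}$ be decreasing and positive is used in Theorem \ref{H1}. Likewise, there is no ``known characterization'' of Carleson measures for $\d$ of the form $|g'|^2dA$ by a single-box or kernel test to invoke in step (iii): Stegenga's capacitary condition and the Arcozzi--Rochberg--Sawyer condition are genuinely multi-box, and the claim that analyticity of $g'$ rescues the one-box test is precisely what your plan assumes rather than proves.

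What the paper actually does at this point is symmetrize your key identity: adding (\ref{b1}) to its companion (\ref{b2}) (with the roles of the two functions exchanged) produces the full Hankel bilinear form $T_{h_{\overline{\bm\lambda}}}(f,g)=\langle fg,h_{\overline{\bm\lambda}}\rangle_\d$, whose boundedness on $\d\times\d$ follows from that of $\H_{\bm\lambda}$; the Arcozzi--Rochberg--Sawyer--Wick theorem (Theorem \ref{Bi D}) then gives $h_{\overline{\bm\lambda}}\in\X$, i.e.\ $|h'_{\overline{\bm\lambda}}|^2dA\in CM(\d)$. That theorem is exactly the deep ``remove the projection'' input your argument needs and does not supply; without citing it (or reproving it), the hard half of Theorem \ref{main00} remains open in your write-up.
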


In the case of $\H_\mu$,
by Theorem \ref{main00}, the operator  $\H_\mu$ is bounded on $\d$ if and only if $|h'_\mu(z)|^2dA(z)$ is a Carleson measure for $\d$, where
$$h_\mu(z)=\int_{[0, 1)}\frac{1}{1-tz}d\mu(t).$$
Note that in  \cite{S} D. Stegenga's beautiful characterization  of $CM(\d)$  is related to  the logarithmic capacity
of a finite union of intervals of the unit circle.  Based on certain integrals involve the Carleson box and the
heightened box,   N. Arcozzi, R. Rochberg and E. Sawyer  \cite[Theorem 1]{ARS} gave  a complete description of $CM(\d)$.  There still lacks a direct relation between moment sequences and the boundedness of $\H_\mu$ on the Dirichlet space.
We will establish these relations through the use of Widom type conditions and the reproducing kernel thesis.

Corresponding  to the Dirichlet inner product (\ref{inner product D}),   the reproducing kernel of the  Dirichlet space $\mathcal{D}$ at a point $w\in\D$ is
$$
K_w(z)=1+ \log\frac{1}{1-z\overline{w}}
$$
and its normalized reproducing kernel
$$
k_w(w)=\frac{K_w(z)}{\sqrt{K_w(w)}}.
$$
Now, we state our characterization of the bounded Hankel type operator $\H_\mu$ on the Dirichlet spaces as follows.

\begin{theor}\label{Hu openanswer1}
Suppose  $\mu$ is a finite positive Borel measure on $[0,1)$. Then the following conditions are equivalent.
\begin{enumerate}
   \item [(i)]  The Hankel type operator  $\H_\mu$ is bounded on $\mathcal{D}$.
   \item [(ii)] The reproducing kernel thesis holds; that is,
   $$
  \sup_{t\in[0,1)}\|\Hu (k_t)\|_{\mathcal{D}}<\infty,
   $$
   where $k_t$ is the normalized reproducing kernel of $\d$ at $t$ in $[0,1)$.
   \item [(iii)] The Widom type condition is true; that is,
   $$
   \sum^\infty_{n=m}n \mu_n^2=O\left(\frac{1}{\log (m+2)}\right).$$
\end{enumerate}
\end{theor}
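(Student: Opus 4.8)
The plan is to prove the cycle $(i)\Rightarrow(ii)\Rightarrow(iii)\Rightarrow(i)$, using Theorem~\ref{main00} as the bridge to $(i)$. Since the moments $\mu_n$ are real, the auxiliary function of Theorem~\ref{main00} associated with $\boldsymbol{\lambda}=\{\mu_n\}_{n\in\mathbb{N}}$ is $h_{\overline{\boldsymbol{\lambda}}}=h_\mu$, so $(i)$ holds precisely when $|h_\mu'(z)|^2\,dA(z)\in CM(\mathcal{D})$. Two elementary identities will be used throughout. Writing $\mathcal{H}_\mu f(z)=\int_{[0,1)}\frac{f(t)}{1-tz}\,d\mu(t)$, the $n$-th Taylor coefficient of $\mathcal{H}_\mu f$ is $a_n=\int_{[0,1)}t^n f(t)\,d\mu(t)$, so computing $|a_0|^2+\sum_{n\geq1}n|a_n|^2$ (equivalently, applying $\int_{\mathbb{D}}(1-tz)^{-2}(1-s\bar z)^{-2}\,dA(z)=(1-ts)^{-2}$ to $(\mathcal{H}_\mu f)'$) gives
\[
\|\mathcal{H}_\mu f\|_{\mathcal{D}}^2=\int_{[0,1)}\int_{[0,1)}f(t)\overline{f(s)}\,\Phi(ts)\,d\mu(t)\,d\mu(s),\qquad\Phi(x)=1+\frac{x}{(1-x)^2}\asymp\frac{1}{(1-x)^2}.
\]
Specializing to $f=k_t$ yields $\|\mathcal{H}_\mu k_t\|_{\mathcal{D}}^2=L_t^{-1}\big(c_0(t)^2+\sum_{n\geq1}nc_n(t)^2\big)$, where $L_t=1+\log\frac{1}{1-t^2}$ and $c_n(t)=\mu_n+\sum_{j\geq1}\frac{t^j}{j}\mu_{n+j}$.

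The implication $(i)\Rightarrow(ii)$ is immediate, since $\|k_t\|_{\mathcal{D}}=1$. For $(ii)\Rightarrow(iii)$ I would fix $m$, set $N\asymp m$ and $t=t_N$ with $1-t_N\asymp1/N$ (so $L_{t_N}\asymp\log N$), and observe that retaining only the terms $1\leq j\leq N$ gives, for \emph{every} $n\geq1$,
\[
c_n(t_N)\;\geq\;t_N^{N}\sum_{j=1}^{N}\frac{\mu_{n+j}}{j}\;\gtrsim\;(\log N)\,\mu_{n+N},
\]
using that $t_N^{N}$ is bounded below and $\{\mu_k\}$ is decreasing. Hence $\sum_{n\geq1}nc_n(t_N)^2\gtrsim(\log N)^2\sum_{n\geq1}n\mu_{n+N}^2$, and the hypothesis $\|\mathcal{H}_\mu k_{t_N}\|_{\mathcal{D}}^2\leq C$ forces $\sum_{k\geq N+1}(k-N)\mu_k^2=\sum_{n\geq1}n\mu_{n+N}^2\lesssim C/\log N$. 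Since $k-N\asymp k$ for $k\geq 2N$, and the block $N\leq k\leq 2N$ is controlled by the pointwise bound $\mu_k\lesssim(k\sqrt{\log k})^{-1}$ (a consequence of the same inequality applied at a comparable scale), this rearranges to $\sum_{k\geq N}k\mu_k^2=O(1/\log N)$, which is $(iii)$. Note that here a single test function at scale $N$ already delivers the whole tail sum $\sum_{k\geq N}k\mu_k^2$ — no summation over scales is involved, which is precisely why the reproducing kernel thesis turns out to be equivalent to boundedness.

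For $(iii)\Rightarrow(i)$ I would return to Theorem~\ref{main00} and show $\nu:=|h_\mu'|^2\,dA\in CM(\mathcal{D})$. Because $h_\mu$ has nonnegative Taylor coefficients, $\nu$ is angularly concentrated about the radius $[0,1)$, and the plan is to show that for such $\nu$ the (capacitary) Stegenga / Arcozzi--Rochberg--Sawyer description of $CM(\mathcal{D})$ collapses to the single Carleson box estimate
\[
\nu(S(I))=O\big(1/\log(e/|I|)\big)\qquad(I\subset\mathbb{T}\text{ an arc}).
\]
Granting this reduction, for $|I|=h$ one bounds $\nu(S(I))$ by the mass of $\nu$ on the annulus $\{1-h<|z|<1\}$, which equals $2\sum_{n\geq1}n^2\mu_n^2\int_{1-h}^{1}r^{2n-1}\,dr\asymp h\sum_{1\leq n\leq1/h}n^2\mu_n^2+\sum_{n>1/h}n\mu_n^2$; the second sum is $(iii)$ with $m\asymp1/h$, while the first follows from $(iii)$ because $\sum_{n=k}^{2k}n\mu_n^2=O(1/\log k)$ gives $\sum_{n=k}^{2k}n^2\mu_n^2=O(k/\log k)$ and hence, summing the dyadic blocks below $1/h$, $\sum_{n\leq1/h}n^2\mu_n^2=O\big((1/h)/\log(e/h)\big)$. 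So $(iii)$ implies the box estimate, the box estimate implies $\nu\in CM(\mathcal{D})$, and Theorem~\ref{main00} then gives $(i)$.

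The main obstacle is the reduction claimed inside $(iii)\Rightarrow(i)$: that for $h_\mu$ with nonnegative Taylor coefficients the elementary Carleson box condition is genuinely \emph{sufficient} for $|h_\mu'|^2\,dA\in CM(\mathcal{D})$. For arbitrary measures the box condition is strictly weaker than $CM(\mathcal{D})$ (this is the point of Stegenga's example), so one must use the special structure — that $|h_\mu'(re^{i\theta})|$ is maximal at $\theta=0$ with effective width $\sim1-r$, and that $\{\mu_n\}$, being a Hausdorff (hence completely monotone) moment sequence, has no coefficient spikes — to control how much a finite union of Carleson boxes can carry and thereby dominate the capacitary condition by the single-box one. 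Everything else — the norm identity, the lower bound for $c_n(t_N)$, the estimate $\mu_n=O((n\sqrt{\log n})^{-1})$, and the dyadic bookkeeping — is routine once this reduction is in hand.
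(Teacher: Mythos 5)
Your cycle $(i)\Rightarrow(ii)\Rightarrow(iii)\Rightarrow(i)$ is the right architecture, and $(i)\Rightarrow(ii)$ together with the testing argument for $(ii)\Rightarrow(iii)$ is essentially sound. But the step $(iii)\Rightarrow(i)$ contains a genuine gap, and it is exactly the one you flag yourself: you reduce everything to the claim that, for the angularly concentrated measure $\nu=|h_\mu'|^2\,dA$, the single Carleson-box estimate $\nu(S(I))=O(1/\log(e/|I|))$ already implies $\nu\in CM(\mathcal{D})$. This is the entire content of the implication --- for general measures the one-box condition is strictly weaker than Stegenga's capacitary condition, and nothing in your sketch converts the qualitative remarks (``effective angular width $\sim 1-r$'', ``completely monotone, no spikes'') into a verification of the Stegenga or Arcozzi--Rochberg--Sawyer condition for a finite union of boxes. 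Without that reduction the proof does not close. The paper sidesteps this entirely: it proves the more general Theorem \ref{H1} for decreasing positive sequences by pure coefficient estimates, splitting $\sum_k\lambda_{n+k+1}|a_k|$ at $k=n+1$, handling the head via the Ces\`aro-type operator bound of Theorem \ref{Ce 1} (whose engine is the Hilbert-type double-sum inequality of Theorem \ref{Hilbert D}) and the tail via Cauchy--Schwarz and $\sum_k k\lambda_k^2<\infty$. No capacitary characterization of $CM(\mathcal{D})$ is ever invoked, which is precisely what makes the argument work.

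A smaller but real slip occurs in $(ii)\Rightarrow(iii)$: the pointwise bound you extract from the tail estimate is $\mu_k^2\lesssim(k\log k)^{-1}$, i.e. $\mu_k\lesssim k^{-1/2}(\log k)^{-1/2}$, not $\mu_k\lesssim k^{-1}(\log k)^{-1/2}$ as written; with the correct bound the block $N\le k\le 2N$ gives only $\sum_{k=N}^{2N}k\mu_k^2\lesssim N/\log N$, which is useless. The fix is easy and does not need any pointwise bound: apply your tail inequality at scale $N/2$, so that $k-N/2\ge k/2$ for all $k\ge N$ and $\sum_{k\ge N}k\mu_k^2\le 2\sum_{k\ge N}(k-N/2)\mu_k^2\lesssim 1/\log N$. (The paper's version of this step instead lower-bounds the inner sum by the single term $\lambda_{2n+1}$ using monotonicity, which accomplishes the same thing.) In summary: $(i)\Rightarrow(ii)\Rightarrow(iii)$ is recoverable with minor repairs, but $(iii)\Rightarrow(i)$ rests on an unproved reduction that is the hard part of the theorem, and you should either prove that reduction or switch to the coefficient-based route through the Ces\`aro operator and the Hilbert-type inequality.
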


To prove Theorem  \ref{Hu openanswer1},  we need Ces\`aro type matrices. As benefit products, the  necessary and sufficient condition for the boundedness of   Ces\`aro type  matrix acting on the  Dirichlet space is obtained.

Let ${\bm \eta}=\{\eta_n\}_{n\in\mathbb{N}}$ be a  sequence of complex numbers. Recall that the Ces\`aro type  matrix $\bm{C}_{\bm{\eta}}$ is the following infinite lower triangular matrix:
$$
\bm{C}_{\bm{\eta}}=
\begin{pmatrix}
\eta_0 & 0 & 0& 0 & \cdots\\
\eta_1 & \eta_1 & 0 & 0 & \cdots\\
\eta_2 & \eta_2 & \eta_2 & 0 & \cdots\\
\vdots & \vdots & \vdots & \ddots & \ddots
\end{pmatrix}.
$$
In the same way as operators induced by Hankel matrices, the matrix
$\bm{C}_{\bm{\eta}}$ induces a Ces\`aro type  operator $\Ce$   as follows:
 \begin{align}\label{b23}
\Ce (f)(z)=\sum_{n=0}^\infty\Big(\eta_n \sum_{k=0}^n a_k\Big)z^n, \quad z\in\D,
\end{align}
for $f(z)=\sum_{n=0}^\infty a_nz^n$ in  $H(\D)$, whenever the right-hand side of (\ref{b23}) makes sense and defines an analytic function in $\D$.

If $\eta_n=1/(n+1)$ for every $n$, then the Ces\`aro type  matrix  $\bm{C}_{\bm{\eta}}$  is the classical  Ces\`aro matrix   $\bm{C}$. In \cite{DS},  N. Danikas and A.  Siskakis showed  that the operator induced by  $\bm{C}$ is bounded from $H^\infty$ to $BMOA$. A.  Siskakis \cite{S1, S2} studied the action of $\bm{C}$ on Hardy spaces.

Given a finite positive Borel measure  $\mu$ on $[0,1)$, if
$$\eta_n=\mu_n=\int_{[0, 1)}\frac{1}{1-tz}d\mu(t)$$
for each  $n$, then we write $\Ce$ as $\C_\mu$. The operator $\C_\mu$ is said to be the  Ces\`aro-like operator which was introduced   in
\cite{GGM, JT} recently. We refer to \cite{BSW, Bla, GGMM, GGM1} for more results about    $\C_\mu$  on some spaces of analytic functions. From Theorem 5 in \cite{GGMM}, if $\mu$ is a 1-logarithmic 1-Carleson measure, then $\C_\mu$  is a bounded on $\d$. Conversely, if  $\C_\mu$  is  bounded on $\d$, then $\mu$ is a $\frac{1}{2}$-logarithmic 1-Carleson measure.
We obtain  the following result.
\begin{theor}\label{Ce 1}
Suppose ${\bm \eta}=\{\eta_n\}_{n=0}^\infty$ is a sequence of complex numbers. Then the following conditions are equivalent.
\begin{enumerate}
   \item [(i)] The Ces\`aro type operator   $\C_{\bm{\eta}}$ is bounded on $\mathcal{D}$.
   \item [(ii)]
  The reproducing kernel thesis holds; that is,
   $$
   \sup_{t\in[0,1)}\|\Ce k_t\|_{\mathcal{D}}<\infty,
   $$
  where $k_t$ is the normalized reproducing kernel of $\d$ at $t$ in $[0,1)$.
   \item [(iii)]
   The Widom type condition is true; that is,
   $$
   \sum^\infty_{n=m}n |\eta_n|^2=O\left(\frac{1}{\log (m+2)}\right).
   $$
\end{enumerate}
\end{theor}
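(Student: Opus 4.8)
The plan is to convert the boundedness of $\mathcal C_{\bm\eta}$ on the Dirichlet space into the boundedness on $\ell^2$ of an explicit factorable (lower-triangular) matrix, for which a classical criterion is available; this will give (i)$\Leftrightarrow$(iii) directly, and the reproducing kernel thesis (ii) will be slotted in by a short computation with the kernels $K_t$.

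First I would set up the dictionary between $\mathcal C_{\bm\eta}$ and $\ell^2$. For $f(z)=\sum_{k\ge0}a_kz^k$ put $b_n=\sum_{k=0}^n a_k$, so that $\sum_n b_nz^n=f(z)/(1-z)$ and $\mathcal C_{\bm\eta}(f)(z)=\sum_n\eta_nb_nz^n$. Expanding $\|g\|_{\mathcal D}^2=|g(0)|^2+\int_{\mathbb D}|g'|^2\,dA$ in Taylor coefficients gives $\|f\|_{\mathcal D}^2\asymp\sum_{k\ge0}(k+1)|a_k|^2$ and $\|\mathcal C_{\bm\eta}(f)\|_{\mathcal D}^2\asymp\sum_{n\ge0}(n+1)|\eta_n|^2|b_n|^2$. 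Writing $x_k=\sqrt{k+1}\,a_k$, so that $b_n=\sum_{k\le n}x_k/\sqrt{k+1}$, one sees that $\mathcal C_{\bm\eta}$ is bounded on $\mathcal D$ if and only if the lower-triangular matrix $M=\big(\tfrac{\sqrt{n+1}}{\sqrt{k+1}}\,\eta_n\,\mathbbm{1}_{\{k\le n\}}\big)_{n,k\ge0}$ is bounded on $\ell^2$, with comparable norms; since passing to absolute values leaves $\|M\|$ unchanged, we may assume $\eta_n\ge0$. I would also record that (iii) forces $\sum_n n|\eta_n|^2<\infty$, hence $\mathcal C_{\bm\eta}(f)\in H(\mathbb D)$ for every $f\in\mathcal D$, so that ``$\mathcal C_{\bm\eta}$ bounded on $\mathcal D$'' is meaningful throughout.

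For (i)$\Leftrightarrow$(iii) I would invoke the classical boundedness criterion for factorable matrices (equivalently, the weighted discrete Hardy inequality): $\big(u_nv_k\mathbbm{1}_{\{k\le n\}}\big)$ is bounded on $\ell^2$ if and only if $\sup_{N\ge0}\big(\sum_{n\ge N}u_n^2\big)\big(\sum_{k\le N}v_k^2\big)<\infty$. With $u_n=\sqrt{n+1}\,\eta_n$ and $v_k=(k+1)^{-1/2}$ we have $\sum_{k\le N}v_k^2\asymp\log(N+2)$ and $\sum_{n\ge N}u_n^2\asymp\sum_{n\ge N}n|\eta_n|^2$, so the criterion reads exactly $\sum_{n\ge m}n|\eta_n|^2=O(1/\log(m+2))$, i.e.\ (iii). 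The implication (i)$\Rightarrow$(ii) is immediate from $\|k_t\|_{\mathcal D}=1$. For (ii)$\Rightarrow$(iii) I would test the kernel thesis on the unnormalized kernels $K_t$, whose Taylor coefficients are $1,t,t^2/2,t^3/3,\dots$; then their partial sums are $B_n(t)=1+\sum_{j=1}^n t^j/j$, and the dictionary above gives $\|\mathcal C_{\bm\eta}k_t\|_{\mathcal D}^2\asymp K_t(t)^{-1}\sum_{n\ge0}(n+1)|\eta_n|^2B_n(t)^2$ with $K_t(t)=1+\log\frac1{1-t^2}$. Choosing $t=1-\frac1m$, one has $K_t(t)\lesssim\log(m+2)$, while $B_n(t)\gtrsim\log(n+2)$ for $0\le n\le m$ (since $(1-\tfrac1m)^m\ge\tfrac14$), so (ii) yields $\sum_{n=0}^m(n+1)(\log(n+2))^2|\eta_n|^2\lesssim\log(m+2)$; a summation by parts against the decreasing weights $(\log(n+2))^{-2}$, together with $\sum_{n\ge m}\frac1{n(\log n)^2}\asymp\frac1{\log m}$, then upgrades this to (iii). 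This closes the cycle (i)$\Rightarrow$(ii)$\Rightarrow$(iii)$\Rightarrow$(i).

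The main obstacle is the factorable-matrix criterion used in (i)$\Leftrightarrow$(iii). Its necessity half is easy — test $M$ on the vectors $\big((k+1)^{-1/2}\mathbbm{1}_{\{k\le N\}}\big)_k$ — but its sufficiency half cannot be obtained from a single Schur test with power-of-logarithm weights, which break down at the critical exponent; one must instead reproduce the genuine proof of the weighted discrete Hardy inequality, for instance by a dyadic regrouping of the index $k$ that reduces the estimate to the unweighted Hardy inequality on a coarser, dyadic-logarithmic scale. The remaining ingredients — the coefficient expansions, the lower bound for $B_n(1-\tfrac1m)$, and the summation by parts in (ii)$\Rightarrow$(iii) — are routine.
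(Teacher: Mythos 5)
Your proposal is correct, but the engine driving the hard implication is genuinely different from the paper's. For $(iii)\Rightarrow(i)$ the paper expands $\big(\sum_{k\le n+1}|a_k|\big)^2$ as a double sum, interchanges the order of summation so that condition (iii) produces the kernel $1/\log(k+j+1)$, and then invokes Shields' Hilbert-type inequality for the Dirichlet space, $\sum_{k,j}\frac{|a_k||a_j|}{\log(k+j+1)}\lesssim\sum_n n|a_n|^2$. You instead conjugate by the diagonal weights $\sqrt{k+1}$ to identify $\Ce$ with the factorable lower-triangular matrix $\big(\sqrt{n+1}\,\eta_n(k+1)^{-1/2}\mathbbm{1}_{\{k\le n\}}\big)$ on $\ell^2$ and invoke the discrete Muckenhoupt/weighted-Hardy criterion $\sup_N\big(\sum_{n\ge N}u_n^2\big)\big(\sum_{k\le N}v_k^2\big)<\infty$, which is exactly (iii). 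This buys you $(i)\Leftrightarrow(iii)$ in one stroke (with necessity by testing on truncations of the kernel coefficients, essentially the paper's kernel test in disguise), and it makes transparent why the Widom condition is both necessary and sufficient; the paper's route via Shields' inequality is more tailored to the Dirichlet space but needs the reproducing-kernel test separately for necessity. Your $(ii)\Rightarrow(iii)$ is also sound, though organized differently: you extract the head bound $\sum_{n\le m}(n+1)(\log(n+2))^2|\eta_n|^2\lesssim\log(m+2)$ and convert it to the tail bound by Abel summation against $(\log(n+2))^{-2}$ (this direction of the conversion does work, as one checks with $w_n-w_{n+1}\asymp n^{-1}(\log n)^{-3}$), whereas the paper bounds the tail directly by discarding the terms $n<m$ and using $\sum_{k\le m}t^k/k\gtrsim t^m\log(m+1)$ at $t=m/(m+1)$. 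One small correction: the sufficiency half of the factorable-matrix criterion is not as delicate as you suggest — the classical proof is precisely a Cauchy--Schwarz/Schur test with the weight $V(k)^{1/2}\asymp(\log(k+2))^{1/2}$, i.e.\ the \emph{half}-power of the logarithm, followed by two telescoping estimates; it closes at that exponent, so neither a dyadic regrouping nor any machinery beyond Cauchy--Schwarz is needed. This does not affect the validity of your argument, since the criterion you cite is correct and classical either way.
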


Following our previous work on the boundedness of Hankel type operators and Ces\`aro type operators  on the Dirichlet space, we also  establish their compactness counterparts. These results are included in Section 4.

\vspace{0.1truecm}

Let $(X_n)_{n\in\mathbb{N}}$ be a sequence of independent identically distributed (i.i.d.) real random variables on a probability space $(\Omega,\mathcal{A},\mathbb{P})$. For a sequence $\{\alpha_n\}$ of complex numbers, one formally defines a random power series
\[
\varphi_\omega(z)=\sum_{n=0}^\infty X_n(\omega)\alpha_n z^n.
\]
By Z. Nehari's theorem, almost surely the random analytic function $\varphi_\omega$ induces a bounded Hankel type operator  on the Hardy space $H^2$ if and only if the symbol function $\varphi_\omega$ is a BMOA function almost surely. The famous unsolved J. Anderson's  question \cite{an} is to seek the necessary and sufficient condition to characterize random BMOA functions. One can consult \cite{Kah} for recent progress and more details in this subject.

According to Theorem \ref{main00}, the symbol function $h_{\overline{{\bm \lambda}}}$ of a bounded Hankel type operator  $\H_{\bm\lambda}$ on $\mathcal{D}$ must be in the Dirichlet space. However, our next corollary illustrates that, under milder conditions, a random Dirichlet function almost surely induced a compact  Hankel type operator, and hence almost surely induced a bounded Hankel type operator.

\begin{theor}\label{random}
	Suppose $\boldsymbol{\lambda}=\{\lambda_n\}_{n\in\mathbb{N}}$ is a sequence of complex numbers.
		Let $(X_n)_{n\in\mathbb{N}}$ be a sequence of i.i.d. real random variables satisfying that  $\mathbb{E}[X_n]=0$ and $\mathbb{E}[X_n^4]$ is finite.
	If $h_{\bm \lambda}\in\mathcal{D}$, then almost surely the Hankel type operator $\H_{\bm{\omega}}$  is compact on $\mathcal{D}$,
where $\boldsymbol{\omega}=\left\{X_n \overline{\lambda_n}\right\}_{n\in\mathbb{N}}$.
\end{theor}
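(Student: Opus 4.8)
Put $\sigma^2=\mathbb E[X_0^2]<\infty$ (finite since $\mathbb E[X_0^4]<\infty$). As the $X_n$ are real, $\overline{\boldsymbol\omega}=\{X_n\lambda_n\}_{n\in\mathbb N}$, so the symbol attached to $\boldsymbol\omega$ by Theorem \ref{main00} is the random power series $\varphi_\omega(z):=h_{\overline{\boldsymbol\omega}}(z)=\sum_{n\ge0}X_n(\omega)\lambda_nz^n$. The hypothesis $h_{\boldsymbol\lambda}\in\mathcal D$ says $\sum_{n\ge1}n|\lambda_n|^2<\infty$, whence $\mathbb E\|\varphi_\omega\|_{\mathcal D}^2=\sigma^2\|h_{\boldsymbol\lambda}\|_{\mathcal D}^2<\infty$; in particular $\varphi_\omega\in\mathcal D$, hence is analytic on $\mathbb D$, almost surely. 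The plan is to exhibit $\H_{\boldsymbol\omega}$ almost surely as a norm limit of finite rank operators. For $N\in\mathbb N$ put $\boldsymbol\omega^{(N)}=\{X_n\overline{\lambda_n}\mathbbm{1}_{\{n\le N\}}\}$: since the $n$-th Taylor coefficient of $\H_{\boldsymbol\omega^{(N)}}(f)$ vanishes for $n>N$, the operator $\H_{\boldsymbol\omega^{(N)}}$ has rank $\le N+1$, while $\H_{\boldsymbol\omega}-\H_{\boldsymbol\omega^{(N)}}=\H_{\boldsymbol\omega^{(>N)}}$ (with $\boldsymbol\omega^{(>N)}=\{X_n\overline{\lambda_n}\mathbbm{1}_{\{n>N\}}\}$) has symbol $g_\omega:=\sum_{n>N}X_n\lambda_nz^n$, which vanishes at the origin. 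By the proof of Theorem \ref{main00} one has $\|\H_{\boldsymbol\omega^{(>N)}}\|^2\lesssim\big\||g_\omega'|^2\,dA\big\|_{CM(\mathcal D)}$, so the statement follows, via Borel--Cantelli along a suitable subsequence $N_j\to\infty$, once one shows
\[
\mathbb E\Big[\big\||g_\omega'|^2\,dA\big\|_{CM(\mathcal D)}\Big]\lesssim\big(\mathbb E[X_0^4]\big)^{1/2}\sum_{n>N}n|\lambda_n|^2\longrightarrow0\qquad(N\to\infty).
\]

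The mechanism is that $|g_\omega'|^2\,dA$ is, on average, a harmless radial measure. A radial weight $W(|z|)$ of finite mass always induces a Carleson measure for $\mathcal D$: since $\tfrac1{2\pi}\int_0^{2\pi}|f(re^{i\theta})|^2d\theta=\sum_k|a_k|^2r^{2k}\le\|f\|_{\mathcal D}^2$, one gets $\int_{\mathbb D}|f|^2W(|z|)\,dA\le\|f\|_{\mathcal D}^2\int_{\mathbb D}W\,dA$ (and, with $\int_{\mathbb D}|W|\,dA$, the same for signed radial weights). Now $\mathbb E|g_\omega'(z)|^2=\sigma^2W_N(|z|)$ with $W_N(r)=\sum_{n>N}n^2|\lambda_n|^2r^{2n-2}$ and $\int_{\mathbb D}W_N\,dA=\sum_{n>N}n|\lambda_n|^2$, so the ``mean part'' of $|g_\omega'|^2\,dA$ contributes at most $\sigma^2\sum_{n>N}n|\lambda_n|^2$ to the Carleson norm, deterministically. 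In the fluctuation $|g_\omega'|^2-\sigma^2W_N$ the diagonal term $\sum_{n>N}n^2(X_n^2-\sigma^2)|\lambda_n|^2|z|^{2n-2}$ is again a random signed radial weight, hence contributes at most $\int_{\mathbb D}|\,\cdot\,|\,dA$, whose expectation is, by independence and Cauchy--Schwarz, $\le\int_{\mathbb D}\big(\sum_{n>N}n^4\,\mathbb E[(X_0^2-\sigma^2)^2]\,|\lambda_n|^4|z|^{4n-4}\big)^{1/2}dA\le(\mathbb E[X_0^4])^{1/2}\int_{\mathbb D}W_N\,dA$, using $\sum_nn^4|\lambda_n|^4|z|^{4n-4}\le W_N(|z|)^2$. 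This is where $\mathbb E[X_0^4]<\infty$ enters.

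What remains is the off-diagonal part $R^{\mathrm{off}}_\omega$, with $dR^{\mathrm{off}}_\omega=\big(\sum_{n\ne m;\,n,m>N}nm\,X_nX_m\lambda_n\overline{\lambda_m}z^{n-1}\bar z^{m-1}\big)dA$. Expanding $f=\sum a_kz^k$ and integrating term by term (with $\int_{\mathbb D}z^p\bar z^q\,dA=\delta_{pq}/(p+1)$) yields $\int_{\mathbb D}|f|^2\,dR^{\mathrm{off}}_\omega=\sum_{k\ne l}c^\omega_{k,l}a_k\bar a_l$, a \emph{zero-diagonal} Hermitian form whose entries $c^\omega_{k,l}=\sum_n\frac{n(n+k-l)}{k+n}X_nX_{n+k-l}\lambda_n\overline{\lambda_{n+k-l}}$ have mean $0$ and $\mathbb E|c^\omega_{k,l}|^2=\sigma^4\sum_n\frac{n^2(n+k-l)^2}{(k+n)^2}|\lambda_n|^2|\lambda_{n+k-l}|^2$. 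Passing to the coordinates $b_k=\sqrt{\max(k,1)}\,a_k$ (so $\|b\|_{\ell^2}=\|f\|_{\mathcal D}$), what is to be estimated is the operator norm of the matrix $\big(c^\omega_{k,l}/\sqrt{\max(k,1)\max(l,1)}\big)_{k,l}$. I would bound its expectation by a Schur-type estimate reducing the operator norm to the row and column sums, together with an $L^p$/Khinchine bound for the $c^\omega_{k,l}$ (the fourth moment again upgrading $L^2$ control to an almost sure one), organising the double sum over $k,l$ so that the total is once more $\lesssim(\mathbb E[X_0^4])^{1/2}\sum_{n>N}n|\lambda_n|^2$; a finite net for the relevant finite-dimensional piece of the unit ball of $\mathcal D$ then converts the genuine supremum over test functions into countably many such estimates, the decay in $N$ absorbing the union bound.

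The principal obstacle is this last step: the oscillatory part $|\varphi_\omega'|^2-\mathbb E|\varphi_\omega'|^2$ must be handled by exploiting the cancellation coming from the independent random signs, because, as the deterministic theory (Theorems \ref{main00} and \ref{Hu openanswer1}) shows, neither the pointwise bound $|f(z)|^2\lesssim\|f\|_{\mathcal D}^2\log\frac{e}{1-|z|}$ nor any coefficient-size condition available under the single hypothesis $h_{\boldsymbol\lambda}\in\mathcal D$ is strong enough to dominate a Carleson norm, and moreover one must control a supremum over the infinite-dimensional unit ball of $\mathcal D$, not merely the value at a fixed test function --- which is exactly what forces the Schur/Khinchine/net combination and the appeal to $\mathbb E[X_0^4]<\infty$.
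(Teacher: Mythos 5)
Your reduction is sound up to a point: passing to the symbol $\varphi_\omega=h_{\overline{\boldsymbol\omega}}$, truncating to finite-rank pieces, and observing that the mean part $\sigma^2W_N(|z|)\,dA$ and the diagonal fluctuation are radial measures of small total mass (hence Carleson for $\mathcal D$ with norm controlled by $\sum_{n>N}n|\lambda_n|^2$) are all correct. But the proof has a genuine gap exactly where you flag ``the principal obstacle'': the off-diagonal part $R^{\mathrm{off}}_\omega$ is never actually estimated. You describe a program (a Schur-type bound on the zero-diagonal random matrix $\bigl(c^\omega_{k,l}/\sqrt{\max(k,1)\max(l,1)}\bigr)$, a Khinchine upgrade, and a net argument), and you assert that it produces the bound $\lesssim(\mathbb E[X_0^4])^{1/2}\sum_{n>N}n|\lambda_n|^2$, but none of this is carried out, and it is far from clear that it can be: a Schur test on a random zero-diagonal matrix built only from second-moment information on the entries typically loses unbounded factors, and controlling the Carleson norm (a supremum over the unit ball of $\mathcal D$, which is strictly stronger than a fixed-test-function bound) is precisely the hard content of the theorem. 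As written, the argument proves the result only for the radial part of $|\varphi_\omega'|^2\,dA$.

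The paper avoids this obstacle entirely by a different route. Its Lemma \ref{bs} (a strengthening of a Brown--Shields criterion) says that if $M_p(\phi',r)\in L^2([0,1],dr)$ for some $p>2$, then $|\phi'|^2\,dA$ is automatically a \emph{vanishing} Carleson measure for $\mathcal D$; so one never needs to test against all of $\mathcal D$, only to bound a single radial integral of an $L^p$ mean. Taking $p=4$, Fubini, the Cauchy--Schwarz inequality in $\omega$, and the fourth-moment Khinchine-type inequality $\mathbb E\bigl|\sum a_iX_i\bigr|^4\le 3\bigl(\sum|a_i|^2\bigr)^2$ (the paper's Lemma \ref{ele}, which is where $\mathbb E[X_0^4]<\infty$ enters) give
\[
\mathbb E\Bigl[\int_0^1 M_4(\varphi_\omega',r)^2\,dr\Bigr]\lesssim\sum_{n\ge1}n|\lambda_n|^2<\infty,
\]
so almost surely $M_4(\varphi_\omega',r)\in L^2(dr)$, hence $|\varphi_\omega'|^2dA$ is a vanishing Carleson measure, and compactness follows from the compact version of Theorem \ref{main00} (Theorem \ref{Main00c}) with no truncation or Borel--Cantelli needed. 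If you want to salvage your approach, the cleanest fix is to replace the entire mean/diagonal/off-diagonal decomposition by this $M_4\in L^2(dr)$ criterion; otherwise you must supply a complete proof of the off-diagonal operator-norm bound, which is a substantial missing piece.
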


 Let $E\subseteq\mathbb{Z}$ be a set. As defined by W. Rudin in \cite{rudin}, the integrable function $f$ on the unite circle $\T$ is called an $E$-function if $\hat{f}(n)$, the $n$-th Fourier coefficient of $f$, is equal to zero  for all $n\in\mathbb{Z}\setminus E$. Suppose $p>2$, the set $E\subseteq\mathbb{Z}$ is called a Rudin's  $\Lambda(p)$ set  if there is a constant $C$ such that
 $
 \|f\|_{L^p(\T)}\leq C\|f\|_{L^2(\T)}
 $
 for all trigonometric polynomials whose coefficients are equal to $0$ on $\mathbb{Z}\setminus E$. The Rudin's $\Lambda(p)$ set is a natural generalization of the classical lacunary sequences (the Hadamard set) which plays important roles in many aspects of analysis (see \cite{rudin}, \cite{GK} and \cite{Bou} for more details). Recall that an increasing sequence $\{n_k\}_{k\in \N}$ of positive integers   is said to be  lacunary if there exists $q$ in $(1, \infty)$ such that $n_{k+1}/n_k>q$ for all $k$. It is also known that there exist  Rudin's  $\Lambda(p)$ sets rather than Hadamard sets.

Our next result presents a characterization of bounded and compact Hankel type operators  on $\mathcal{D}$ when the non zero Fourier coefficients of the symbol function are located in a  Rudin's $\Lambda(p)$ set.

\begin{cor}\label{rudin}
	Suppose $p>2$, the set $E$ is a  Rudin's $\Lambda(p)$ set and
	$\boldsymbol{\lambda}=\{\lambda_{n}: n\in E\}$  is a sequence of complex numbers. Then the following statements are equivalent.
	\begin{enumerate}
		\item [(i)] The Hankel type operator  $\H_{\boldsymbol{\lambda}}$ is compact on $\d$.
		\item [(ii)] The Hankel type  operator  $\H_{\boldsymbol{\lambda}}$ is  bounded  on $\d$.
		\item [(iii)] The symbol function $h_{\overline{{\bm \lambda}}}(z)=\sum_{n\in E} \overline{\lambda_n}z^n$ belongs to $\d$; that is,    $$
		\sum_{n\in E}  (n+1) |\lambda_{n}|^2<\infty.
		$$
	\end{enumerate}
\end{cor}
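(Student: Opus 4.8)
The plan is to prove the cycle of implications (i)$\Rightarrow$(ii)$\Rightarrow$(iii)$\Rightarrow$(i). The first implication is trivial, since a compact operator is bounded. For (ii)$\Rightarrow$(iii), note that $\H_{\boldsymbol{\lambda}}(1)(z)=\sum_{n\in E}\lambda_nz^n=h_{\boldsymbol{\lambda}}(z)$, so boundedness of $\H_{\boldsymbol{\lambda}}$ on $\d$ forces $h_{\boldsymbol{\lambda}}\in\d$, i.e. $\sum_{n\in E}(n+1)|\lambda_n|^2<\infty$, which is (iii); the $\Lambda(p)$ hypothesis is not needed for this. The whole content therefore lies in (iii)$\Rightarrow$(i): granted that $g:=h_{\overline{\boldsymbol{\lambda}}}\in\d$ has Taylor coefficients supported on the $\Lambda(p)$ set $E$, we must show that $\H_{\boldsymbol{\lambda}}$ is compact on $\d$. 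By the compactness counterpart of Theorem \ref{main00} established in Section~4, this reduces to showing that $|g'(z)|^2\,dA(z)$ is a \emph{vanishing} Carleson measure for $\d$.

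The heart of the matter is the quantitative estimate that, \emph{for every $u\in\d$ whose Taylor coefficients are supported on $E$, the measure $|u'(z)|^2\,dA(z)$ belongs to $CM(\d)$ and $\bigl\||u'|^2\,dA\bigr\|_{CM(\d)}\le C\,\|u\|_{\d}$}, with $C$ depending only on $p$ and the $\Lambda(p)$ constant of $E$. One should stress that this fails for a general $u\in\d$ — the inclusion $|u'|^2\,dA\in CM(\d)$ is strictly stronger than $u\in\d$, and that gap is exactly what makes the unrestricted boundedness problem difficult — so the spectral restriction must be exploited in an essential way; this is the main obstacle. To prove the estimate I would pass to polar coordinates and work on each circle $|z|=r$. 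Writing $f_r(\theta)=f(re^{i\theta})$ and $u_r(\theta)=u(re^{i\theta})$, Hölder's inequality on $\T$ with the conjugate exponents $p/2$ and $p/(p-2)$ gives
\[
\int_{\T}|f_r|^2|u_r'|^2\,\frac{d\theta}{2\pi}\ \le\ \|f_r\|_{L^{s}(\T)}^2\,\|u_r'\|_{L^{p}(\T)}^2,\qquad s=\frac{2p}{p-2}\in(2,\infty).
\]
Since $\theta\mapsto e^{i\theta}u'(re^{i\theta})$ has Fourier spectrum contained in $E$ and the same modulus as $u_r'$, the defining $\Lambda(p)$ inequality (extended from trigonometric polynomials to $L^2$-functions with spectrum in $E$ by a routine limiting argument) gives $\|u_r'\|_{L^{p}(\T)}\le C\,\|u_r'\|_{L^{2}(\T)}$. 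On the other hand, since $s$ is finite, a Sobolev embedding on $\T$ — namely $W^{1/2-1/s,2}(\T)\hookrightarrow L^{s}(\T)$ together with $\sum_n(1+n)^{1-2/s}|a_n|^2\le\sum_n(1+n)|a_n|^2$ — yields the uniform bound $\sup_{0\le r<1}\|f_r\|_{L^{s}(\T)}\le C\,\|f\|_{\d}$, that is, $\d\hookrightarrow H^{s}$. Substituting these two facts and integrating against $2r\,dr$ over $[0,1)$ gives $\int_{\D}|f|^2|u'|^2\,dA\le C\,\|f\|_{\d}^2\int_{\D}|u'|^2\,dA\le C\,\|f\|_{\d}^2\|u\|_{\d}^2$, which is the claimed estimate.

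It remains to pass from boundedness of the Carleson measure to vanishing, which I would do by truncation. Write $g=g_N+r_N$, where $g_N(z)=\sum_{n\in E,\ n\le N}\overline{\lambda_n}z^n$ and $r_N=g-g_N$; both have spectrum in $E$, and $\|r_N\|_{\d}\to 0$ as $N\to\infty$ because $g\in\d$. As $g_N$ is a polynomial, $|g_N'|^2\,dA\le C_N\,dA$ is a vanishing Carleson measure for $\d$ (since $dA$ is one, the inclusion $\d\hookrightarrow A^2$ being compact), while the estimate above gives $\bigl\||r_N'|^2\,dA\bigr\|_{CM(\d)}\le C\,\|r_N\|_{\d}$. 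Using $|g'|^2\le 2|g_N'|^2+2|r_N'|^2$, for any sequence $(f_j)$ in $\d$ with $\|f_j\|_{\d}\le 1$ converging to $0$ uniformly on compact subsets of $\D$ one obtains $\limsup_{j\to\infty}\int_{\D}|f_j|^2|g'|^2\,dA\le 2C^2\|r_N\|_{\d}^2$ for every $N$, hence the limit is $0$. Thus $|g'|^2\,dA$ is a vanishing Carleson measure for $\d$, so $\H_{\boldsymbol{\lambda}}$ is compact on $\d$, closing the cycle. Aside from this truncation, which is routine, the only substantial ingredient is the quantitative estimate, and within it the genuine difficulty is transferring — via the $\Lambda(p)$ inequality applied circle by circle — the $L^2$-information carried by $u\in\d$ into the $L^p$-information needed to control $\int_{\D}|f|^2|u'|^2\,dA$.
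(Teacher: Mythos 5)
Your argument is correct, and its analytic core --- transferring $L^2$-control of $u'$ into $L^p$-control circle by circle via the $\Lambda(p)$ inequality, then applying H\"older against the embedding $\d\hookrightarrow H^{2p/(p-2)}$ --- is exactly the mechanism of the paper's Lemma \ref{vcar} combined with Lemma \ref{bs}. You differ from the paper in two organizational respects. First, for (ii)$\Rightarrow$(iii) you simply test $\H_{\boldsymbol{\lambda}}$ on the constant function $1$, which is more elementary than the paper's appeal to Theorem \ref{main00} (boundedness forces $h_{\overline{\boldsymbol{\lambda}}}\in\X\subseteq\d$); both work, and yours does not need the $\Lambda(p)$ hypothesis, as you note. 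Second, to upgrade the Carleson measure to a \emph{vanishing} one, the paper's Lemma \ref{bs} truncates radially, isolating the annulus $\delta<|z|<1$ where the $L^2(dr)$ tail of $M_p(\phi',r)$ is small, whereas you truncate in frequency, writing $g=g_N+r_N$ and combining the quantitative bound $\||r_N'|^2dA\|_{CM(\d)}\lesssim\|r_N\|_\d$ for the spectrum-$E$ tail with compactness of $\d\hookrightarrow A^2$ for the polynomial part. Your version has the small advantage of making explicit the uniform estimate $\||u'|^2dA\|_{CM(\d)}\lesssim\|u\|_\d$ for all $u\in\d$ with spectrum in $E$ (only implicit in the paper), at the cost of invoking the standard sequential characterization of vanishing Carleson measures --- which the paper itself uses in proving Lemma \ref{bs}, so nothing is lost.
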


Throughout  this paper, the symbol $A\thickapprox B$ means that $A\lesssim
B\lesssim A$. We say that $A\lesssim B$ if there exists a positive
constant $C$ such that $A\leq CB$.

\section{Bounded Hankel type operators $\H_{\boldsymbol{\lambda}}$  on $\mathcal D$}

In this  section we  will prove Theorem \ref{main00}. We also consider the M\"obius invariance of the norm of bounded $\H_{\boldsymbol{\lambda}}$  on the Dirichlet space.   For $f\in H(\D)$, we say that $f\in \mathcal{X}$ if
$$
\|f\|^2_{\mathcal{X}}=|f(0)|^2+\||f'|^2dA\|_{CM(\d)}<\infty.
$$
Denote by $\X_0$ the norm closure in $\X$ of the space of polynomials. Equivalently, $f\in \X_0$ if and only if $|f'|^2dA$ is a vanishing Carleson measure for $\d$. We refer to  \cite{ARSW0, Wu} for the theory of
$\X$ and $\X_0$.

Given a function $b$ in $H(\D)$,  define  a Hankel type bilinear form on the Dirichlet space, initially for $f$, $g$ in $\mathcal{P}$, as
$$
T_b(f, g):=\langle fg, b\rangle_{\mathcal D}.
$$
Here $\mathcal{P}$ is the space of polynomials.
The norm of the bilinear form $T_b$ is
$$
\|T_b\|_{\d\times \d}=\sup\left\{|T_b(f, g)|: \ \ \|f\|_\d=\|g\|_\d=1, \ f, g\in  \mathcal{P}\right\}.
$$
For a bounded bilinear form $T_b$ on $\d$,  $T_b$ is said to   be compact on $\d$ if $T_b(B_\d\times B_\d)$ is precompact in $\mathbb{C}$, where $B_\d=\{f\in \d: \|f\|_\d\leq 1\}$. In other words,  $T_b$ is compact on $\d$ if and only if for all bounded sequences $\{(f_n, g_n)\}\subseteq \d\times \d$, the sequence
$\{T_b(f_n, g_n)\}$ has  a convergent subsequence. See  \cite{BT} for more descriptions on compact  bilinear operators.

With respect to the   normalized basis  $\{(n+1)^{-1/2}z^n\}$ of the Dirichlet space $\d$, the matrix representations of  $T_b$   is
$$
\left(\frac{j+k}{\sqrt{j+1}\sqrt{k+1}}\overline{b_{j+k}}\right), \ \ \ \ j, k=0, 1, \cdots,
$$
where $b_{j+k}$ is the $(j+k)$-th  Taylor coefficient of $b$.
N. Arcozzi, R. Rochberg, E. Sawyer and B. Wick  characterized the boundedness and compactness of  the bilinear form on the Dirichlet space as follows.

\begin{otherth}\cite[Theorem 1.1]{ARSW}\label{Bi D}
Let $b\in H(\D)$. Then the following  assertions hold.
\begin{enumerate}
   \item [(i)] $T_b$ extends to a bounded bilinear form  on $\d$ if and only if $b\in \X$.
   \item [(ii)] $T_b$ extends to a compact  bilinear form  on $\d$ if and only if $b\in \X_0$.
   \end{enumerate}
\end{otherth}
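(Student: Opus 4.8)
The plan is to prove the boundedness criterion in (i) and the compactness criterion in (ii) separately, handling each ``if'' direction by a direct estimate and each ``only if'' direction by duality. The natural starting point is to rewrite the form. Expanding the Dirichlet inner product in $T_b(f,g)=\langle fg,b\rangle_\d$ and using $(fg)'=f'g+fg'$ gives
$$T_b(f,g)=f(0)g(0)\overline{b(0)}+\int_\D\big(f'(z)g(z)+f(z)g'(z)\big)\overline{b'(z)}\,dA(z),$$
valid for $f,g\in\mathcal P$. This displays $T_b$ as a symmetric form whose size is governed by two area integrals against $|b'|^2\,dA$, which is precisely the measure appearing in the definition of $\X$.

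For the sufficiency in (i), suppose $b\in\X$, so that $|b'|^2\,dA\in CM(\d)$. Applying Cauchy--Schwarz to the first integral and then the Carleson inequality (\ref{CM for diri}) to the factor involving $g$,
$$\Big|\int_\D f'g\,\overline{b'}\,dA\Big|\le\Big(\int_\D|f'|^2\,dA\Big)^{1/2}\Big(\int_\D|g|^2|b'|^2\,dA\Big)^{1/2}\lesssim\|f\|_\d\,\|b\|_\X\,\|g\|_\d,$$
and symmetrically for the second integral, while the boundary term is controlled by $|b(0)|\le\|b\|_\X$ and the boundedness of point evaluation at $0$. Hence $|T_b(f,g)|\lesssim\|b\|_\X\|f\|_\d\|g\|_\d$ and $T_b$ extends boundedly. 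For the sufficiency in (ii), I would use that $\X_0$ is the polynomial closure in $\X$: choosing polynomials $p_n\to b$ in $\X$, the boundedness estimate applied to $b-p_n$ yields $\|T_b-T_{p_n}\|_{\d\times\d}\lesssim\|b-p_n\|_\X\to0$; since each $T_{p_n}$ has matrix $\big(\tfrac{j+k}{\sqrt{(j+1)(k+1)}}\,\overline{(p_n)_{j+k}}\big)$ supported on the finite set $\{j+k\le\deg p_n\}$, it is of finite rank, so $T_b$ is a norm limit of finite-rank forms and is therefore compact.

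For the two necessity directions I would pass to the predual. Boundedness of $T_b$ means exactly that $b$ induces, through the Dirichlet pairing, a bounded linear functional on the weak product space $\d\odot\d$, the completion of finite sums $\sum_j f_jg_j$ under the norm $\inf\sum_j\|f_j\|_\d\|g_j\|_\d$. Thus it suffices to identify $(\d\odot\d)^\ast$ with $\X$. The sufficiency above already furnishes the inclusion $\X\hookrightarrow(\d\odot\d)^\ast$, so the reverse inclusion is the heart of the matter. Here I would invoke a weak factorization for the Dirichlet space together with the characterizations of $CM(\d)$ due to Stegenga \cite{S} and Arcozzi--Rochberg--Sawyer \cite[Theorem 1]{ARS}: from the action of a bounded functional on products $fg$ one reconstructs the inequality $\int_\D|h|^2|b'|^2\,dA\lesssim\|h\|_\d^2$ for all $h\in\d$, that is $b\in\X$. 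The analogous statement in (ii) follows from the vanishing version: $T_b$ compact forces $T_b(f_n,g_n)\to0$ whenever $f_n,g_n\rightharpoonup0$ weakly in $\d$, which translates, through the same factorization, into $|b'|^2\,dA$ being a vanishing Carleson measure, i.e. $b\in\X_0$.

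The main obstacle is exactly this reverse inclusion $(\d\odot\d)^\ast\subseteq\X$. Unlike the Hardy space, Carleson measures for $\d$ are \emph{not} detected by a single-box (reproducing-kernel) test, so one cannot extract the Carleson property of $|b'|^2\,dA$ merely by substituting the normalized kernels $k_t$ into $T_b$. Overcoming this requires the full weak-factorization and capacitary machinery for the Dirichlet space, and it is here that the deep input of \cite{ARS} and \cite{S} enters.
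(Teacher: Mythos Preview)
This statement is Theorem~A in the paper, and the paper does \emph{not} prove it: it is quoted verbatim from \cite[Theorem~1.1]{ARSW} and used as a black box in the proof of Theorem~\ref{main00}. So there is no ``paper's own proof'' to compare your proposal against.

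As for the content of your sketch: the sufficiency arguments in both (i) and (ii) are correct and standard. For the necessity directions you have correctly identified the architecture---bounded $T_b$ is exactly a bounded functional on the weak product $\d\odot\d$, and the task is to show $(\d\odot\d)^*\subseteq\X$---and you rightly flag that this reverse inclusion is the deep step, not recoverable from a kernel test alone. But your proposal does not actually carry it out; ``invoke a weak factorization for the Dirichlet space'' is the name of the difficulty rather than its resolution. The actual proof in \cite{ARSW} establishes precisely this weak factorization (that the predual of $\X$ is $\d\odot\d$), and it is substantial: it relies on the tree/capacitary description of $CM(\d)$ from \cite{ARS} together with a constructive decomposition of Dirichlet functions. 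So your outline is accurate at the level of strategy, but the hard theorem you are citing is essentially the theorem you are trying to prove.
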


\begin{proof}[Proof of Theorem \ref{main00}]
Given a sequence of complex number ${\bm \lambda}=\{\lambda_n\}^\infty_{n=0}$, suppose  $h_{\overline{{\bm \lambda}}}$ is analytic  on  $\mathbb{D}$ and the measure $|h_{\overline{{\bm \lambda}}}'(z)|^2dA(z)$ is a  Carleson measure for $\d$. Then
$h_{\overline{\bm \lambda}}\in \X$. It follows that $h_{\bm \lambda}\in \d$, and
$$\|h_{\bm \lambda}\|^2_\d=|\lambda_0|^2+\sum_{n=0}^\infty (n+1)|\lambda_{n+1}|^2<\infty.$$
Let $f(z)=\sum^\infty_{k=0} c_k z^k$ belong to  $\d$.
Then for any $j\in\mathbb{N}$,
\begin{align*}
\left|\sum_{k=0}^\infty \lambda_{j+k}c_k\right| &\lesssim \|f\|_\d \bigg(\sum_{k=0}^\infty \frac{|\lambda_{j+k}|^2}{k+1}\bigg)^{1/2}\\
&\lesssim  \|f\|_\d \bigg(\sum_{k=0}^\infty (j+k+1)|\lambda_{j+k}|^2\bigg)^{1/2}\\
&\lesssim  \|f\|_\d \|h_{\bm \lambda}\|_\d.
\end{align*}
Write  $a_j=\sum_{k=0}^\infty \lambda_{j+k}c_k$.
Then $\{a_j\}_{j\in \mathbb{N}}$ is a bounded sequence of  complex numbers   and hence  $\H_{\bm \lambda}(f)(z)=\sum_{j=0}^\infty a_jz^j$ is analytic on $\mathbb{D}$.
Note that
\[
\|\H_{\bm \lambda}f\|_\d^2= |a_0|^2+\sum_{j=0}^{\infty}(j+1)|a_{j+1}|^2.
\]
Let $m$ be a positive integer. For any polynomial $g(z)=\sum^m_{j=0}b_j z^j$ with $\|g\|_\d\leq 1$, we have
\begin{align*}
& \bigg|a_0 \overline{b_0}+ \sum_{j=0}^{m-1} (j+1)a_{j+1}\overline{b_{j+1}}\bigg|\\
\leq & |\lambda_0| |c_0| |b_0|+|b_0|\Big|\sum^\infty_{k=1}\lambda_{k}c_{k}\Big|+\Big|\sum^{m-1}_{j=0} \overline{b_{j+1}} (j+1)\sum^\infty_{k=0}\lambda_{j+1+k}c_k\Big|\\
= & |\lambda_0| |c_0| |b_0|+ |b_0| \left|\int_\D\frac{f(z)-f(0)}{z}\overline{h'_{\overline{\bm \lambda}} (z)} dA(z)\right|\\
&+\left|\int_\D f(z)g'_1(z)\ \overline{h'_{\overline{\bm \lambda}} (z)} dA(z)\right|\\
\leq& |\lambda_0| \|f\|_{\mathcal{D}} \|g\|_{\mathcal{D}}+\|g\|_{\mathcal{D}}\Big(\int_\D\Big|\frac{f(z)-f(0)}{z}\Big|^2|h'_{\overline{\bm \lambda}} (z)|^2 dA(z)\Big)^{\frac12}\\
& + \Big(\int_\D |f(z)|^2|h'_{\overline{\bm \lambda}} (z)|^2 dA(z)\Big)^{\frac12}\Big(\int_\D \left|g_1'(z)\right|^2 dA(z)\Big)^{\frac12},
\end{align*}
where $g_1(z)=\sum^m_{j=0}\overline{b_j} z^j$.
Since $|h'_{\overline{\bm \lambda}} (z)|^2 dA(z)$ is a  Carleson measure for $\d$, there exists a positive  constant $C$ such that
\[
\bigg|a_0 \overline{b_0}+ \sum_{j=0}^{m-1} (j+1)a_{j+1}\overline{b_{j+1}}\bigg|\leq C\|f\|_\d.
\]
Therefore
\[
\bigg(|a_0|^2+\sum_{j=0}^{m-1}(j+1)|a_{j+1}|^2\bigg)^{1/2}\leq C\|f\|_\d
\]
and $\|\H_{\bm \lambda}f\|_\d\leq C\|f\|_\d.$

On the other hand, suppose the operator  $\H_{\bm \lambda}$   is bounded on $\mathcal{D}$. Then for any $f$ and $g$ in $\d$,
$$
|\langle\H_{\bm \lambda} f,g\rangle_{\mathcal{D}}|\lesssim \|f\|_\d \|g\|_\d.
$$
This implies that  $h_{\overline{{\bm \lambda}}}$ is analytic in $\mathbb{D}$ and
$$
\int_{\mathbb{D}}|h_{\overline{{\bm \lambda}}}'(z)|^2dA(z)<\infty.
$$
Moreover, for any polynomial $f$ and $g$,
\begin{align}\label{b1}
\int_\D f(z)g'(z)\ \overline{h'_{\overline{\bm \lambda}} (z)} dA(z)=\int_\D (\H_{\bm \lambda} f)'(z) g'(\overline{z}) dA(z),
\end{align}
and
\begin{align}\label{b2}
\int_\D f'(z) g(z)\overline{h'_{\overline{\bm \lambda}} (z)} dA(z)=\int_\D (\H_{\bm \lambda} g)'(z) f'(\overline{z})dA(z).
\end{align}
Consequently,
$$
\left|f(0)g(0)\overline{h_{\overline{\bm \lambda}} (0)}\right|+\left|\int_\D (f(z)g(z))'\overline{h'_{\overline{\bm \lambda}} (z)} dA(z)\right|\lesssim \|f\|_\d \|g\|_\d.
$$
Therefore, $T_{h_{\overline{\bm \lambda}}}$ is a bounded bilinear form   on $\d$.
It follows from  Theorem \ref{Bi D}  that  $h_{\overline{\bm \lambda}}\in \X$ and  $|h_{\overline{{\bm \lambda}}}'(z)|^2dA(z)$ is a Carleson measure for $\d$. This completes the whole proof of Theorem \ref{main00}.
\end{proof}

Denote by $\text{Aut}(\D)$ the  M\"obius group which consists  of all one-to-one  analytic functions that map  $\D$ onto itself. Let $\widetilde{\d}$ be the space of functions $f$ in $\d$ with $f(0)=0$.
For the bounded $\H_{\bm\lambda}$  on  $\widetilde{\d}$,    the norm of $\H_{\bm\lambda}$  is M\"obius invariant in the following sense.

\begin{prop}
 Let  $\boldsymbol{\lambda}=\{\lambda_n\}_{n\in\mathbb{N}}$ be  a sequence of complex numbers. Suppose  the Hankel type operator  $\H_{\bm\lambda}$ is bounded on $\widetilde{\d}$. Then there exist positive constants $C_1$ and $C_2$
 depending only on  $\boldsymbol{\lambda}$ such that
 $$
 C_1 \|\H_{\bm\lambda}\| \leq \|\H_{\bm{\lambda_\phi}}\|\leq C_2 \|\H_{\bm\lambda}\|
 $$
 for all $\phi \in \text{Aut}(\D)$, where $\bm{\lambda_\phi}=\left\{\overline{\lambda_{\phi, n}}\right\}_{n\in\mathbb{N}}$  with
 $$
 h_{\overline{\bm \lambda}}\circ \phi(z)= \sum_{n=0}^\infty \lambda_{\phi, n} z^n.
 $$
\end{prop}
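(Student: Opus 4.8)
The plan is to run the argument through Theorem~\ref{main00}: first convert $\|\H_{\bm\lambda}\|$ into a Carleson measure quantity, then observe that pre-composition with $\phi$ pulls that measure back, and finally exploit the conformal invariance of the Dirichlet integral. I would begin by extracting the quantitative content of Theorem~\ref{main00} from its proof. When $\H_{\bm\lambda}$ is bounded on $\widetilde{\d}$, combining the identities (\ref{b1})--(\ref{b2}), the Cauchy--Schwarz bounds used there, the estimate for the constant term $a_0=\langle f,\sum_{n\ge 1}\overline{\lambda_n}\,n^{-1}z^n\rangle_{\mathcal D}$, so that $|a_0|\le(\int_\D|h_{\overline{\bm\lambda}}'|^2\,dA)^{1/2}\|f\|_{\mathcal D}$, and Theorem~\ref{Bi D} applied to $T_{h_{\overline{\bm\lambda}}}$, one gets
\[
\|\H_{\bm\lambda}\|\thickapprox\Big\|\,|h_{\overline{\bm\lambda}}'(z)|^2\,dA(z)\,\Big\|_{CM(\d)}^{1/2}
\]
with absolute comparison constants; passing to $\widetilde{\d}$ is exactly what makes the non‑invariant term $f(0)g(0)\overline{h_{\overline{\bm\lambda}}(0)}$ of the bilinear form disappear, and the $a_0$–contribution is absorbed because $\int_\D|h_{\overline{\bm\lambda}}'|^2\,dA\le\|\,|h_{\overline{\bm\lambda}}'|^2\,dA\,\|_{CM(\d)}$. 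Since the symbol of $\H_{\bm{\lambda_\phi}}$ is precisely $h_{\overline{\bm\lambda}}\circ\phi$, the proposition reduces to
\[
\Big\|\,|(h_{\overline{\bm\lambda}}\circ\phi)'(z)|^2\,dA(z)\,\Big\|_{CM(\d)}\thickapprox\Big\|\,|h_{\overline{\bm\lambda}}'(z)|^2\,dA(z)\,\Big\|_{CM(\d)}
\]
with constants depending only on $\bm\lambda$; replacing $\phi$ by $\phi^{-1}$, it suffices to prove ``$\lesssim$''.

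For the reduction to a single normalized estimate I would change variables by $w=\phi(z)$. With $\psi=\phi^{-1}$ and $b=h_{\overline{\bm\lambda}}$, the relations $dA(z)=|\psi'(w)|^2\,dA(w)$ and $\phi'(\psi(w))\psi'(w)=1$ give, for $g\in\mathcal D$,
\[
\int_\D|g(z)|^2\,|(b\circ\phi)'(z)|^2\,dA(z)=\int_\D|(g\circ\psi)(w)|^2\,|b'(w)|^2\,dA(w),
\]
while the conformal invariance of the Dirichlet integral, $\int_\D|(h\circ\phi)'|^2\,dA=\int_\D|h'|^2\,dA$, yields $\|h\circ\phi\|_{\mathcal D}^2=|h(\phi(0))|^2+\int_\D|h'|^2\,dA$. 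Substituting $h=g\circ\psi$, we find that $\|\,|(b\circ\phi)'|^2\,dA\,\|_{CM(\d)}$ equals the supremum of $\int_\D|h|^2\,|b'|^2\,dA$ over those $h\in\mathcal D$ with $|h(\phi(0))|^2+\int_\D|h'|^2\,dA\le1$. In other words, the left‑hand side is the Carleson test for the \emph{fixed} measure $|b'|^2\,dA$ performed against the norm of $\mathcal D$ re‑based at the point $\phi(0)$ instead of at $0$; the role of $\widetilde{\d}$ is precisely to strip away the single non‑conformally‑invariant summand $|f(0)|^2$ and leave only this change of base point.

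The main obstacle is then to show that re‑basing the $\mathcal D$–norm from $0$ to $\phi(0)$ costs only a factor depending on $b=h_{\overline{\bm\lambda}}$ and not on $\phi$. Using the reproducing kernel, $|h(\phi(0))-h(0)|=|\langle h,\,K_{\phi(0)}-K_0\rangle_{\mathcal D}|\le(\log\tfrac1{1-|\phi(0)|^2})^{1/2}(\int_\D|h'|^2\,dA)^{1/2}$, together with $\int_\D|h|^2|b'|^2\,dA\le 2\int_\D|h-h(\phi(0))|^2|b'|^2\,dA+2|h(\phi(0))|^2\int_\D|b'|^2\,dA$, one disposes of the second term via $\int_\D|b'|^2\,dA=\|h_{\overline{\bm\lambda}}\|_{\mathcal D}^2-|\lambda_0|^2$, and is left to bound $\int_\D|u|^2|b'|^2\,dA$ for $u$ vanishing at $\phi(0)$. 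Here one must use \emph{both} the finiteness of $\|\,|b'|^2\,dA\,\|_{CM(\d)}$ and the finiteness of $\int_\D|b'|^2\,dA$ to absorb the would‑be loss $\log\tfrac1{1-|\phi(0)|^2}$: split $\D$ into the Carleson box at $\phi(0)$, where the vanishing condition on $u$ forces control, and its complement, where $|b'|^2\,dA$ contributes only its (bounded) total mass. Making this absorption uniform in $\phi$ is the heart of the matter; granting it, the chain of equivalences above closes, and applying the same bound to $\phi^{-1}$ produces the two constants $C_1,C_2$ depending only on $\bm\lambda$.
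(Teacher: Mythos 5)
Your overall route is the paper's: extract from the proof of Theorem \ref{main00} the two--sided comparison between $\|\H_{\bm\lambda}\|$ and the Carleson norm of $|h_{\overline{\bm\lambda}}'|^2\,dA$ on $\widetilde{\d}$, identify the symbol of $\H_{\bm{\lambda_\phi}}$ with $h_{\overline{\bm\lambda}}\circ\phi$, and then compare the two Carleson norms via the change of variables $w=\phi(z)$ and the conformal invariance of $\int_\D|g'|^2\,dA$. Up to and including the substitution, your argument and the paper's coincide (modulo a harmless difference in how the $CM$--norm is normalized).

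The divergence is in what happens after the substitution, and that is where your proposal has a genuine gap. The paper reads the change of variables as an exact identity $\||(h_{\overline{\bm\lambda}}\circ\phi)'|^2dA\|_{CM(\widetilde{\d})}=\||h_{\overline{\bm\lambda}}'|^2dA\|_{CM(\widetilde{\d})}$, tacitly identifying the image of $\widetilde{\d}$ under $g\mapsto g\circ\phi^{-1}$ with $\widetilde{\d}$; you correctly note that this image is $\{k\in\d:\ k(\phi(0))=0\}$, so the substitution really yields the Carleson test re--based at $\phi(0)$, and the proposition then hinges on showing that this re--based test is comparable to the test over $\widetilde{\d}$ \emph{uniformly in} $\phi$. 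You do not prove this. Your splitting gives $\int_\D|u|^2|b'|^2dA\lesssim\|\,|b'|^2dA\|_{CM(\widetilde{\d})}^{2}+|u(0)|^2\int_\D|b'|^2dA$ with $|u(0)|^2\lesssim\log\frac{1}{1-|\phi(0)|^2}$, and the absorption of this logarithm is only announced (``granting it, the chain of equivalences closes''). Moreover the sketched absorption is not obviously viable: outside the Carleson box at $\phi(0)$ the measure $|b'|^2dA$ indeed contributes only its total mass, but a test function $u$ normalized by $u(\phi(0))=0$ can still have modulus of order $\bigl(\log\frac{1}{1-|\phi(0)|^2}\bigr)^{1/2}$ there, so ``total mass times sup'' does not remove the logarithm. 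Either you follow the paper and treat the change of variables as a bijection of the relevant test classes (in which case the proof ends at the substitution), or you must actually supply the uniform re--basing estimate you call the heart of the matter; as written, that step is missing.
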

\begin{proof}
By Theorem \ref{main00}, the boundedness of  $\H_{\bm\lambda}$  on  $\widetilde{\d}$ yields
$$
\||h'_{\overline{\bm \lambda}}|^2dA\|_{CM(\widetilde{\d})}<\infty.
$$
From the proof of Theorem \ref{main00}, there are  positive constants $C_1$ and $C_2$ depending only on  $\boldsymbol{\lambda}$ such that
\begin{equation}\label{Y30}
C_1\||h'_{\overline{\bm \lambda}}|^2dA\|_{CM(\widetilde{\d})}\leq  \|\H_{\bm\lambda}\|\leq C_2 \||h'_{\overline{\bm \lambda}}|^2dA\|_{CM(\widetilde{\d})}.
\end{equation}
 Let  $\phi \in \text{Aut}(\D)$. For $g\in \widetilde{\d}$, by the change of  variables, we see
\begin{align*}
 &\int_\D |g(z)|^2 |(h_{\overline{\bm \lambda}}\circ \phi)'(z)|^2dA(z)\\
 =& \int_\D   |g(\phi^{-1}(w))|^2   |h'_{\overline{\bm \lambda}}(w)|^2 dA(w).
 \end{align*}
 Also,
 $$
 \int_\D |g'(z)|^2 dA(z)= \int_\D   |(g\circ\phi^{-1})'(w)|^2  dA(w).
 $$
 Thus there is $C>0$ such that
 $$
 \int_\D |g(z)|^2 |(h_{\overline{\bm \lambda}}\circ \phi)'(z)|^2dA(z)\leq C \int_\D |g'(z)|^2 dA(z)
 $$
 for all $g\in \widetilde{\d}$  if and only if
 $$
 \int_\D   |k(w)|^2   |h'_{\overline{\bm \lambda}}(w)|^2 dA(w)\leq C \int_\D |k'(z)|^2 dA(z)
 $$
 for all $k\in \widetilde{\d}$. This means
 $$
 \||h'_{\overline{\bm \lambda}}|^2dA\|_{CM(\widetilde{\d})}=\| |(h_{\overline{\bm \lambda}}\circ \phi)'|^2 dA\|_{CM(\widetilde{\d})}.
 $$
 Combining this with (\ref{Y30}), we get the desired result.
\end{proof}

\section{Bounded Hankel type operators  $\H_\mu$ and Ces\`aro type  operators    on the Dirichlet space }
This section is devoted to an intrinsic description of the  boundedness  of $\H_\mu$ on $\d$. Indeed, we can characterize bounded Hankel type operators  $\H_\mu$    on $\d$ induced  by a decreasing sequence of positive numbers. The proof is based on our  characterization of bounded Ces\`aro type  operators   on $\d$.

The following Hilbert's double theorem for the Dirichlet space plays a crucial role in our proofs. By using Schur's test, one can show that inequality (see the proof of Theorem 2 in \cite{Sh} for more details).

\begin{otherth}\cite[p. 814]{Sh}\label{Hilbert D}
Let $f(z)=\sum_{k=0}^\infty a_k z^k$ in $\d$. Then there exists a positive constant $C$ independent of $f$ such that
\begin{equation*}
\sum^\infty_{m=1}\sum_{n=1}^\infty \frac{|a_n| |a_m|}{\log(n+m+1)}\leq C\sum^\infty_{n=1} n|a_n|^2.
\end{equation*}
\end{otherth}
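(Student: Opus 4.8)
The plan is to prove the inequality
\[
\sum_{m=1}^\infty \sum_{n=1}^\infty \frac{|a_n|\,|a_m|}{\log(n+m+1)} \le C \sum_{n=1}^\infty n|a_n|^2
\]
by an application of Schur's test to the nonnegative symmetric kernel
\[
K(n,m) = \frac{1}{\log(n+m+1)}, \qquad n,m \ge 1,
\]
viewed as an operator on $\ell^2$ after absorbing the weights $n$ into the vectors. Concretely, setting $b_n = \sqrt{n}\,|a_n|$, the claimed inequality is equivalent to
\[
\sum_{m,n\ge 1} \frac{b_n b_m}{\sqrt{nm}\,\log(n+m+1)} \le C \sum_{n\ge 1} b_n^2,
\]
i.e.\ to the boundedness on $\ell^2$ of the matrix $\big(\tfrac{1}{\sqrt{nm}\,\log(n+m+1)}\big)_{n,m\ge1}$. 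For Schur's test with test sequence $\{p_n\}$ and exponent pair summing appropriately, it suffices to find positive weights $p_n$ and a constant $C$ such that
\[
\sum_{m=1}^\infty \frac{1}{\sqrt{nm}\,\log(n+m+1)}\, p_m \le C\, p_n \qquad \text{for all } n\ge 1,
\]
and symmetrically with the roles of $n$ and $m$ exchanged (the kernel being symmetric, one inequality suffices).

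The key step is the choice of the Schur weights. I would take $p_n = n^{-1/2}$, so that the required estimate becomes
\[
\sum_{m=1}^\infty \frac{1}{\sqrt{n}\, m\, \log(n+m+1)} \le \frac{C}{\sqrt{n}},
\]
that is,
\[
\sum_{m=1}^\infty \frac{1}{m\,\log(n+m+1)} \le C \qquad \text{uniformly in } n\ge 1.
\]
This last bound is the heart of the matter and is verified by splitting the sum at $m \approx n$: for $1 \le m \le n$ one has $\log(n+m+1) \ge \log(n+1) \gtrsim \log n$, so that block contributes at most $\frac{1}{\log(n+1)}\sum_{m\le n}\frac1m \lesssim \frac{\log(n+1)}{\log(n+1)} \asymp 1$; for $m > n$ one compares the sum with the integral $\int_n^\infty \frac{dt}{t\log t}$ via the substitution showing $\sum_{m>n}\frac{1}{m\log(m+1)}$ converges — wait, it does not converge, so more care is needed: instead, for $m>n$ use $\log(n+m+1)\ge \log(m+1)$ and note $\sum_{n<m\le n^2}\frac{1}{m\log(m+1)} \lesssim \frac{1}{\log n}\cdot \log n \asymp 1$ by another integral comparison, while the tail $\sum_{m>n^2}\frac{1}{m\log(m+1)}$ is again handled by $\int_{n^2}^\infty\frac{dt}{t\log t}=\infty$ — so the correct decomposition is dyadic: $\sum_{2^k\le m<2^{k+1}}\frac{1}{m\log(n+m+1)} \le \frac{1}{\log(\max(n,2^k))} $, and summing over $k$ from $0$ to $\lfloor\log_2 n\rfloor$ gives $\lesssim \frac{\log_2 n}{\log n}\asymp 1$, while summing over $k > \log_2 n$ gives $\sum_{k>\log_2 n}\frac{1}{\log 2^k}=\sum_{k>\log_2 n}\frac{1}{k\log 2}$, which diverges. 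Hence the bare kernel $\frac{1}{\log(n+m+1)}$ with weights $p_n=n^{-1/2}$ is \emph{not} quite enough, and one must instead exploit the factor $\frac{1}{\sqrt{nm}}$ directly: with it present, $\sum_{m}\frac{1}{m\log(n+m+1)}$ is precisely what is needed and the dyadic tail $\sum_{k>\log_2 n} 2^{-k}\cdot 2^k \cdot \frac{1}{k}$ — here the $2^{-k}$ from $1/m$ and the $2^k$ from the number of terms cancel — yields $\sum_{k>\log_2 n}\frac1k$, still divergent, confirming that the genuinely correct test sequence is $p_n = 1$ paired with the full matrix $\frac{1}{\sqrt{nm}\log(n+m+1)}$: then one needs $\sum_m \frac{1}{\sqrt{nm}\log(n+m+1)} \le C$, and the dyadic block $2^k\le m<2^{k+1}$ contributes $\lesssim \frac{2^{k/2}}{\sqrt n\,\log(\max(n,2^k)+2)}$, whose sum over $k\le \log_2 n$ is $\lesssim \frac{1}{\sqrt n\log n}\sum_{k\le\log_2 n}2^{k/2}\lesssim \frac{1}{\sqrt n\log n}\cdot\sqrt n = \frac1{\log n}$ and over $k>\log_2 n$ is $\lesssim \frac1{\sqrt n}\sum_{k>\log_2 n}\frac{2^{k/2}}{k}$, which diverges.

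Given the delicacy just exposed, the honest approach — and the one I expect the referenced proof in \cite{Sh} to take — is to apply Schur's test to the kernel $\frac{1}{\sqrt{nm}\log(n+m+1)}$ with \emph{asymmetric} test functions, or equivalently to absorb only \emph{part} of the weight, using $p_n = n^{-1/4}$ (or a logarithmic refinement $p_n = n^{-1/2}(\log(n+1))^{1/2}$), so that the two Schur inequalities $\sum_m K(n,m)p_m^2 \le C p_n^2 \cdot(\text{RHS weight ratio})$ become a convergent integral $\int_1^\infty \frac{dt}{\sqrt{t}\,\log(n+t+1)} \cdot t^{-1/4}$-type bound. The main obstacle, and where all the real work lies, is therefore this verification that the off-diagonal decay $\frac{1}{\log(n+m+1)}$, combined with the Cauchy–Schwarz-induced weight $(nm)^{-1/2}$, is \emph{just barely} summable against the right logarithmically-tilted Schur weight; one must choose the tilt so precisely that the near-diagonal terms ($m\asymp n$) and the far tail ($m\gg n$ or $m\ll n$) are simultaneously controlled, which typically forces the weight $p_n \asymp n^{-1/2}\big(\log(n+1)\big)^{1/2}$ and a careful integral comparison $\sum_{m\ge1}\frac{(\log(m+1))^{1/2}}{m\,\log(n+m+1)} \lesssim (\log(n+1))^{1/2}$. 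Once this single summation estimate is established, Schur's test closes the argument immediately. I would present the proof by first reducing to the $\ell^2$-matrix boundedness as above, then stating the Schur weight, then devoting the bulk of the argument to the one scalar sum, splitting it at $m = n$ and estimating each piece by comparison with $\int \frac{(\log t)^{1/2}}{t\log t}\,dt = \int \frac{dt}{t(\log t)^{1/2}}$, which is exactly on the borderline and gives the matching power of $\log(n+1)$.
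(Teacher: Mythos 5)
Your overall strategy --- reduce to the $\ell^2$-boundedness of the matrix $\bigl(\frac{1}{\sqrt{nm}\,\log(n+m+1)}\bigr)_{n,m\ge 1}$ and apply Schur's test --- is exactly the route the paper indicates (it cites Shields and remarks that the inequality follows from Schur's test), but your execution fails at the decisive step: the choice of Schur weight. The weight you finally settle on, $p_n=n^{-1/2}(\log(n+1))^{1/2}$, does not work, and the scalar estimate you claim for it,
\[
\sum_{m\ge 1}\frac{(\log(m+1))^{1/2}}{m\,\log(n+m+1)}\lesssim (\log(n+1))^{1/2},
\]
is false: for $m>n$ one has $\log(n+m+1)\le \log(2m+1)\lesssim\log(m+1)$, so that portion of the sum is bounded \emph{below} by a constant times $\sum_{m>n}\frac{1}{m(\log(m+1))^{1/2}}$, which diverges (compare with $\int^{\infty}\frac{dt}{t(\log t)^{1/2}}=\infty$). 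The integral $\int\frac{dt}{t(\log t)^{1/2}}$ is not ``on the borderline''; it diverges at infinity, and no treatment of the near-diagonal block can compensate for a divergent far tail. A positive logarithmic tilt helps the block $m\le n$ but ruins the block $m>n$.

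The fix is to tilt by a \emph{negative} power of the logarithm: take $p_n=n^{-1/2}(\log(n+2))^{\alpha}$ with any $-1<\alpha<0$, say $\alpha=-\tfrac12$. For $m\le n$ use $\log(n+m+1)\ge\log(n+1)$ together with $\sum_{m\le n}\frac{(\log(m+2))^{\alpha}}{m}\approx\frac{(\log(n+2))^{\alpha+1}}{\alpha+1}$ (convergent at the lower end precisely because $\alpha>-1$), which gives a contribution $\lesssim n^{-1/2}(\log(n+2))^{\alpha}=p_n$. For $m>n$ use $\log(n+m+1)\ge\log(m+1)$ together with $\sum_{m>n}\frac{(\log(m+2))^{\alpha-1}}{m}\approx\frac{(\log(n+2))^{\alpha}}{-\alpha}$ (convergent at infinity precisely because $\alpha<0$), again $\lesssim p_n$. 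Since the kernel is symmetric, this single inequality closes Schur's test and yields the theorem. Your earlier trial weights $p_n=n^{-1/2}$, $p_n=1$, and $p_n=n^{-1/4}$ are correctly diagnosed as failing, but the final write-up should not retain those dead ends; state the correct weight once and verify the two-sided summation estimate above.
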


We first prove Theorem \ref{Ce 1}.

\begin{proof}[Proof of Theorem \ref{Ce 1}]
Since $\|k_w\|_\d=1$ for any $w\in\mathbb{D}$, we obtain the implication $(i)\Rightarrow (ii)$.

Next, for each $t\in[0,1)$, recall that the normalized reproducing kernel $k_t$ for the Dirichlet space
$$
k_t(z)=\left(1+\log \frac{1}{1-t^2}\right)^{-\frac{1}{2}}\left(1+\sum_{n=1}^\infty \frac{1}{n} t^n z^n\right).
$$
By direct computations,
\begin{align*}
&\| \Ce k_t\|^2_{\mathcal{D}}\\
= & \left(1+\log \frac{1}{1-t^2}\right)^{-1}\left(|\eta_0|^2+\sum^\infty_{n=0}(n+1)|\eta_{n+1}|^2\Big(1+\sum^{n+1}_{k=1}\frac{t^{k}}{k}\Big)^2\right).
\end{align*}
Thus for any positive integer $m$ and $t\in[0,1)$,
\begin{align*}
 \| \Ce k_t\|^2_{\mathcal{D}}
\gtrsim&  \left(\log\frac{e}{1-t}\right)^{-1}\sum^\infty_{n=m}(n+1)|\eta_{n+1}|^2\Big(\sum^m_{k=1}\frac{t^{k}}{k}\Big)^2\\
\gtrsim& \left(\log\frac{e}{1-t}\right)^{-1} t^{2m}(\log(m+1))^2\sum^\infty_{n=m}(n+1)|\eta_{n+1}|^2.
\end{align*}
In particular, let $t=\frac{m}{m+1}\in(0,1)$,
$$
\frac{1}{\log (m+1)} \gtrsim \sum^\infty_{n=m} (n+1)|\eta_{n+1}|^2.
$$
By (ii), we have $\| \Ce k_t\|^2_{\mathcal{D}}\lesssim 1$ and
   $$
   \sum^\infty_{n=m}n |\eta_n|^2=O\left(\frac{1}{\log (m+2)}\right),
   $$
which gives $(ii)\Rightarrow (iii)$.

$(iii)\Rightarrow (i)$.  The Widom type condition gives
$$|\eta_n|^2\lesssim\frac{1}{(n+1)\log(n+2)},$$
for all  $n\in \mathbb{N}$.
For  $f(z)=\sum^\infty_{k=0}a_k z^k$ in $\mathcal{D}$,  we get
\begin{align*}
\left|\eta_n\sum^n_{k=0}a_k\right|\leq & |\eta_n|\left(\sum^n_{k=0}\frac{1}{k+1}\right)^{\frac12}\left(\sum^n_{k=0}(k+1)|a_k|^2\right)^{\frac12}\\
\leq &|\eta_n|\left(\log(n+2)\right)^{\frac12} \|f\|_{\mathcal{D}}\\
\lesssim & \|f\|_{\mathcal{D}}
\end{align*}
for all  nonnegative integers $n$.
Thus $\Ce(f)$ is well defined in $H(\D)$.

Because of  $(iii)$,
\begin{align}\label{b27}
\sum^\infty_{n=0}(n+1)|\eta_{n+1}|^2<\infty.
\end{align}
Let  $g(z)=\sum^\infty_{k=0}a_k z^k$ be in the Dirichlet space $\mathcal{D}$. Then
\begin{align*}
&\|\Ce g\|^2_{\mathcal{D}}\\
\leq  & |\eta_0 a_0|^2 +2 \sum^\infty_{n=0}(n+1)|\eta_{n+1}|^2 |a_0|^2+ 2\sum^\infty_{n=0}(n+1)|\eta_{n+1}|^2\Big( \sum^{n+1}_{k=1} |a_k| \Big)^2.
\end{align*}
Using (\ref{b27}), (iii),  and  Theorem \ref{Hilbert D}, we deduce
\begin{align*}
&\|\Ce g\|^2_{\mathcal{D}}\\
\lesssim & \|g\|_\d^2+ \small{\sum^\infty_{n=0}(n+1)|\eta_{n+1}|^2 \bigg(\sum^{\infty}_{k=1} |a_k| \chi_{\{k\leq n+1\}}(k) \bigg)\bigg(\sum^{\infty}_{j=1} |a_j|\chi_{\{j\leq n+1\}}(j) \bigg)} \\
\thickapprox &  \|g\|_\d^2 +\sum^\infty_{k=1} \sum^\infty_{j=1} |a_k| |a_j|\bigg(\sum^\infty_{n=\max\{k-1,j-1\}}(n+1)|\eta_{n+1}|^2\bigg)\\
\lesssim &  \|g\|_\d^2 + \sum^\infty_{k=1} \sum^\infty_{j=1} \frac{|a_k| |a_j|}{\log(k+j+1)}\\
\lesssim & \|g\|^2_{\mathcal{D}},
\end{align*}
where $\chi_{\{k\leq n+1\}}(k)$ is the characteristic function on  ${\{k\leq n+1\}}\cap \N^+$. Here $\N^+$ is the set of positive integers. Hence $\Ce$ is bounded on $\mathcal{D}$.  The proof of Theorem \ref{Ce 1} is complete.
\end{proof}

The proof of Theorem \ref{Hu openanswer1} follows from the next more general result.

\begin{theor}\label{H1}
Suppose ${\bm \lambda}=\{\lambda_n\}^\infty_{n=0}$  is a decreasing sequence of positive real numbers. Then the following conditions are equivalent.
\begin{enumerate}
   \item [(i)] The Hankel type operator  $\H_{\bm \lambda}$ is bounded on $\mathcal{D}$.
   \item [(ii)]
   The reproducing kernel thesis holds; that is,
   $$
  \sup_{t\in[0,1)}\|\H_{\bm \lambda} k_t\|_{\mathcal{D}}<\infty,
   $$
    where $k_t$ is the normalized reproducing kernel of $\d$ at $t$ in $[0,1)$.
   \item [(iii)]
   The Widom type condition is true; that is,
   $$
   \sum^\infty_{n=m}n \lambda_n^2=O\left(\frac{1}{\log (m+2)}\right).
   $$
\end{enumerate}
\end{theor}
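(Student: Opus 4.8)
The plan is to prove Theorem \ref{H1} by relating the Hankel type operator $\H_{\bm\lambda}$ to the Ces\`aro type operator $\C_{\bm\eta}$ for a suitably chosen sequence $\bm\eta$, and then invoke Theorem \ref{Ce 1}. The key observation is that for a decreasing sequence ${\bm\lambda}$, summation by parts (Abel summation) converts the Hankel sum $\sum_{k=0}^\infty \lambda_{n+k}a_k$ into something comparable to a Ces\`aro-type average. Concretely, writing $\Delta\lambda_j = \lambda_j - \lambda_{j+1}\ge 0$, one has $\lambda_{n+k} = \sum_{j\ge n+k}\Delta\lambda_j$ (assuming $\lambda_n\to 0$, which follows from condition (iii) and also must be verified to even make sense of $\H_{\bm\lambda}$), so
\[
\sum_{k=0}^\infty \lambda_{n+k}a_k = \sum_{j\ge n}\Delta\lambda_j \sum_{k=0}^{j-n} a_k.
\]
This exhibits $\H_{\bm\lambda}$ as a "superposition" of shifted Ces\`aro averages; the decreasing hypothesis is exactly what makes the weights $\Delta\lambda_j$ nonnegative, so no cancellation is lost when passing to estimates.

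The logical skeleton I would follow mirrors the proof of Theorem \ref{Ce 1}: the implication $(i)\Rightarrow(ii)$ is immediate since $\|k_t\|_\d = 1$; for $(ii)\Rightarrow(iii)$ I would compute $\|\H_{\bm\lambda}k_t\|^2_\d$ explicitly using the series expansion of $k_t$ given in the excerpt, extract a lower bound of the form
\[
\|\H_{\bm\lambda}k_t\|_\d^2 \gtrsim \Big(\log\tfrac{e}{1-t}\Big)^{-1} t^{2m}\,\big(\text{something}\big)^2 \sum_{n\ge m} n\lambda_n^2,
\]
and specialize to $t = m/(m+1)$ to obtain the Widom bound. The coefficient of $z^n$ in $\H_{\bm\lambda}k_t$ is $c_0\lambda_n + \sum_{k\ge 1}\frac{t^k}{k}\lambda_{n+k}$ (up to the normalization factor), and since all terms are positive and $\lambda$ is decreasing, each such coefficient is $\gtrsim \lambda_{2n}\sum_{k=1}^{n}\frac{t^k}{k} \gtrsim \lambda_{2n}\log(n+1)$ for $t$ near $1$, which feeds the lower bound. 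The main content is $(iii)\Rightarrow(i)$: here I would first check $\H_{\bm\lambda}(f)$ is well-defined in $H(\D)$ for $f\in\d$ (the Widom condition gives $\lambda_n^2\lesssim \frac{1}{n\log n}$, so $|\sum_k\lambda_{n+k}a_k| \lesssim \lambda_n(\sum_k \frac{1}{k+1})^{1/2}\|f\|_\d \cdot$ wait — better: bound via $\sum_k \frac{\lambda_{n+k}^2}{k+1}$ as in the proof of Theorem \ref{main00}), and then estimate $\|\H_{\bm\lambda}f\|_\d^2 = |a_0'|^2 + \sum_{n\ge 0}(n+1)|a_{n+1}'|^2$ where $a_n' = \sum_k\lambda_{n+k}a_k$. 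Expanding $(a_n')^2$ and using that $\lambda$ is decreasing so $\lambda_{n+k}\le \lambda_n$, one reduces the double sum, after swapping orders of summation, to an expression controlled by $\sum_{n\ge\max\{k,j\}} n\lambda_n^2$ times $|a_k||a_j|$; the Widom condition turns this into $\sum_{k,j}\frac{|a_k||a_j|}{\log(k+j+1)}$, and Theorem \ref{Hilbert D} closes the estimate by $\|f\|_\d^2$ — exactly parallel to the end of the proof of Theorem \ref{Ce 1}.

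The step I expect to be the main obstacle is the bookkeeping in $(iii)\Rightarrow(i)$: one must be careful that after Abel summation (or after directly expanding $|\sum_k\lambda_{n+k}a_k|^2$ and using $\lambda_{n+k}\lambda_{n+j}\le \lambda_n\lambda_{\max\{k,j\}}$ or similar), the weight that survives when summing over $n$ is genuinely $\sum_{n\ge N} n\lambda_n^2$ with $N\approx\max\{k,j\}$ rather than something weaker; getting the index ranges to line up so that the Widom bound $O(1/\log N)$ can be applied with $N\approx k+j$, and then matching the hypothesis of Hilbert's double theorem (which needs the $\frac{1}{\log(k+j+1)}$ kernel) requires some care with the $+1$'s and with separating out the $k=0$ or $n=0$ terms, which contribute only $\|f\|_\d^2$. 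A secondary subtlety is justifying interchanges of summation and the a priori well-definedness of $\H_{\bm\lambda}(f)$; both are routine given the decay $\lambda_n^2\lesssim \frac{1}{n\log n}$ but should be stated cleanly. Finally, Theorem \ref{Hu openanswer1} follows immediately by taking $\lambda_n = \mu_n$, since the moment sequence of a finite positive measure on $[0,1)$ is automatically decreasing and positive.
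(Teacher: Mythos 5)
Your skeleton for $(i)\Rightarrow(ii)$ and $(ii)\Rightarrow(iii)$ coincides with the paper's: positivity and monotonicity give the lower bound $\|\H_{\bm \lambda}k_t\|_\d^2\gtrsim (\log\frac{e}{1-t})^{-1}t^{2m}(\log(m+1))^2\sum_{n\ge m}(n+1)\lambda_{2n+1}^2$, and $t=m/(m+1)$ yields the Widom condition (the paper runs this separately for odd and even indices, a detail worth recording). The genuine gap is in $(iii)\Rightarrow(i)$. If you expand $\big(\sum_{k\ge 0}\lambda_{n+k+1}|a_k|\big)^2$ over \emph{all} $k$ and swap the order of summation, the constraint $n\ge\max\{k,j\}$ that you need in order to apply the Widom bound with $N\approx k+j$ simply is not there: $n$ ranges over all of $\mathbb{N}$ independently of $k$ and $j$. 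The substitutes you float do not rescue this. Using $\lambda_{n+k+1}\le\lambda_{n+1}$ for every $k$ leaves the factor $\big(\sum_{k\ge 0}|a_k|\big)^2$, which can be infinite for $f\in\d$; using your inequality $\lambda_{n+k}\lambda_{n+j}\le\lambda_n\lambda_{\max\{k,j\}}$ leads to the weight $\sum_n(n+1)\lambda_{n+1}$, which diverges for, e.g., $\lambda_n\asymp n^{-1/2}(\log n)^{-1}$ (a sequence satisfying (iii)); and Cauchy--Schwarz in $n$ gives only $(\log(k+2)\log(j+2))^{-1/2}$, a geometric mean that is too large to feed into Theorem \ref{Hilbert D}. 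So "bookkeeping" undersells the obstacle: the estimate as described does not close.

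The missing idea is a near/far splitting of the Hankel sum at $k\approx n$. The paper writes
\[
\sum_{k=0}^{\infty}\lambda_{n+k+1}|a_k|\le\sum_{k=0}^{n+1}\lambda_{n+k+1}|a_k|+\sum_{k=n+1}^{\infty}\lambda_{n+k+1}|a_k|.
\]
On the near piece, $\lambda_{n+k+1}\le\lambda_{n+1}$ turns the contribution into $\sum_n(n+1)\lambda_{n+1}^2\big(\sum_{k=0}^{n+1}|a_k|\big)^2=\|\C_{\bm\lambda}f_2\|_\d^2$ with $f_2(z)=\sum_k|a_k|z^k$, and Theorem \ref{Ce 1} (already established via Theorem \ref{Hilbert D}) bounds this by $\|f\|_\d^2$; here the constraint $n\ge k-1$ is built into the Ces\`aro structure, which is exactly where $n\ge\max\{k,j\}$ comes from. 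On the far piece one must use the \emph{other} monotonicity bound $\lambda_{n+k+1}\le\lambda_k$, then Cauchy--Schwarz against the weights $1/k$ and Fubini, which lands directly on $\sum_k k\lambda_k^2<\infty$ without Hilbert's inequality at all. Your Abel-summation reformulation is plausible in spirit but is not carried through and would face the same near/far dichotomy. The closing remark that Theorem \ref{Hu openanswer1} follows by taking $\lambda_n=\mu_n$ (a decreasing positive sequence) is correct and is how the paper deduces it.
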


\begin{proof}
If  $\H_{\bm \lambda}$ is bounded on $\mathcal{D}$, it is clear that (ii) holds.

$(ii)\Rightarrow (iii)$. Since   $\{\lambda_n\}$  is a decreasing sequence of positive numbers, 
\begin{align*}
\|\H_{\bm \lambda} k_t\|_{\mathcal{D}}^2
\geq & \left(1+\log \frac{1}{1-t^2}\right)^{-1}\sum^\infty_{n=1} (n+1)  \Big(\sum^\infty_{k=1}\lambda_{n+k+1}\frac{t^{k}}{k}\Big)^2 \\
\geq&\left(1+\log \frac{1}{1-t^2}\right)^{-1}\sum^\infty_{n=m} (n+1)  \Big(\sum^n_{k=1}\lambda_{n+k+1}\frac{t^{k}}{k}\Big)^2 \\
\geq& \left(1+\log \frac{1}{1-t^2}\right)^{-1}\sum^\infty_{n=m} (n+1) \lambda_{2n+1}^2 \Big(\sum^m_{k=1}\frac{t^{k}}{k}\Big)^2 \\
\gtrsim& \left(\log \frac{e}{1-t}\right)^{-1} t^{2m}(\log(m+1))^2 \sum^\infty_{n=m}(n+1)\lambda_{2n+1}^2,
\end{align*}
for all $t\in [0, 1)$ and all  positive integers $m$.

For any fixed integer $m$, taking  $t=\frac{m}{m+1}$ in the above, we get
$$
 \sum^\infty_{n=m}(2n+1)\lambda_{2n+1}^2\lesssim \frac{1}{\log (m+1)}
$$
and
$$
 \sum^\infty_{n=m}(2n+2)\lambda_{2n+2}^2\lesssim \frac{1}{\log (m+1)}.
$$
Then  the desired result holds.

$(iii)\Rightarrow (i)$. It follows from  (iii) that
\begin{align}\label{q1}
 \lambda_k^2 \lesssim \frac{1}{k \log (k+1)}
\end{align}
for all positive integers $k$.
Let  $f(z)=\sum^\infty_{k=0}a_k z^k$ belong to $\mathcal{D}$. By (\ref{q1}) and the monotonicity of $\{\lambda_k\}$,
\begin{align*}
\Big|\sum^\infty_{k=0}\lambda_{n+k}a_k\Big|
\leq & \lambda_n  |a_0| +\sum^\infty_{k=1}\lambda_{n+k}|a_k| \\
\lesssim  & \|f\|_{\mathcal{D}}+\|f\|_{\mathcal{D}}\Big(\sum^\infty_{k=1}\frac{\lambda^2_{n+k}}{k+1}\Big)^{\frac12}\\
\lesssim & \|f\|_{\mathcal{D}}+ \|f\|_{\mathcal{D}}\Big(\sum^\infty_{k=1}\frac{1}{(k+1)^2\log(k+1)}\Big)^{\frac12}\\
\lesssim & \|f\|_{\mathcal{D}}
\end{align*}
for all  nonnegative integers $n$.
Then  $\H_{\bm \lambda}(f)$ is analytic on the unit disk $\mathbb{D}$ for any  $f$ in $\d$.

Observe that by condition (iii), we have
\begin{align}\label{q2}
\sum^\infty_{n=0}(n+1)\lambda_{n+1}^2<\infty.
\end{align}
Then the H\"older inequality gives
$
\sum^\infty_{k=0} \lambda_k |a_k|\lesssim \|f\|_\d.
$
Consequently,
\begin{align}\label{q3}
\|\H_{\bm \lambda} f\|^2_{\mathcal{D}}\leq  &   \left(\sum^\infty_{k=0} \lambda_k |a_k|\right)^2  + \sum^\infty_{n=0}(n+1)\Big( \sum^\infty_{k=0}\lambda_{n+k+1} |a_k| \Big)^2 \nonumber \\
\lesssim & \|f\|_\d^2+ \sum^\infty_{n=0}(n+1)\Big(\sum^{n+1}_{k=0} \lambda_{n+k+1} |a_k|  \Big)^2 \nonumber \\
&+\sum^\infty_{n=0}(n+1)\Big(\sum^\infty_{k=n+1} \lambda_{n+k+1} |a_k|  \Big)^2.
\end{align}

By  Theorem \ref{Ce 1} and the monotonicity of the sequence $\bm \lambda$, it is true that
\begin{align}\label{q4}
&\sum^\infty_{n=0}(n+1)\Big(\sum^{n+1}_{k=0} \lambda_{n+k+1} |a_k|  \Big)^2 \nonumber \\
\leq & \sum^\infty_{n=0}(n+1) \lambda_{n+1}^2 \Big(\sum^{n+1}_{k=0}  |a_k|  \Big)^2 \nonumber \\
\leq & \|\C_{\bm \lambda} f_2\|^2\lesssim \|f\|^2_\d,
\end{align}
 where $f_2(z)=\sum^\infty_{k=0}|a_k| z^k$ with    the same Dirichlet norm of $f$.

 Note that  the monotonicity of the sequence $\bm \lambda$ again,  the H\"older inequality and (\ref{q2}). Then
 \begin{align}\label{q5}
 &\sum^\infty_{n=0}(n+1)\Big(\sum^\infty_{k=n+1} \lambda_{n+k+1} |a_k|  \Big)^2  \nonumber \\
 \leq & \sum^\infty_{n=0}(n+1)\Big(\sum^\infty_{k=n+1} \lambda_{k} |a_k|  \Big)^2  \nonumber \\
 \lesssim & \|f\|^2_\d \sum^\infty_{n=0}(n+1) \bigg( \sum^\infty_{k=n+1} \frac{\lambda^2_k}{k}\bigg)\nonumber \\
 \thickapprox & \|f\|^2_\d  \sum^\infty_{k=1}\frac{\lambda^2_k}{k} \sum^{k-1}_{n=0}(n+1) \nonumber \\
  \thickapprox & \|f\|^2_\d\sum^\infty_{k=1} k \lambda^2_k  \lesssim  \|f\|^2_\d.
 \end{align}
 Joining (\ref{q3}), (\ref{q4}), and (\ref{q5}), we get the boundedness of $\H_{\bm \lambda}$ on $\d$. The proof is complete.
\end{proof}

\section{Compact Hankel and Ces\`aro type  operators   on $\d$}

In this section, we give corresponding  results about the compactness of  Hankel and Ces\`aro type operators   on $\d$.

The result below is the compact version of Theorem \ref{main00}.
\begin{theor} \label{Main00c}
Suppose $\boldsymbol{\lambda}=\{\lambda_n\}_{n\in\mathbb{N}}$ is a sequence of complex numbers. Then the Hankel type operator  $\H_{\bm\lambda}$ is compact  on the Dirichlet space $\mathcal{D}$ if and only if $h_{\overline{{\bm \lambda}}}$ is analytic on  $\mathbb{D}$ and the measure $|h_{\overline{{\bm \lambda}}}'(z)|^2dA(z)$ is a vanishing Carleson measure for the Dirichlet space $\d$.
\end{theor}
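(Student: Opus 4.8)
The plan is to derive Theorem \ref{Main00c} as a quantitative and approximative refinement of Theorem \ref{main00}: one repeats that argument, with ``Carleson measure for $\d$'' replaced by ``vanishing Carleson measure for $\d$'' and with part $(ii)$ of Theorem \ref{Bi D} used in place of part $(i)$. Recall that $|h_{\overline{\bm\lambda}}'|^2dA$ is a vanishing Carleson measure for $\d$ exactly when $h_{\overline{\bm\lambda}}\in\X_0$, i.e.\ when $h_{\overline{\bm\lambda}}$ lies in the $\X$-norm closure of the polynomials $\mathcal{P}$.

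For the sufficiency I would argue by finite-rank approximation. Assume $h_{\overline{\bm\lambda}}$ is analytic on $\D$ and $h_{\overline{\bm\lambda}}\in\X_0$, and choose polynomials $q_N(z)=\sum_{n}\overline{\mu^{(N)}_n}\,z^n$, with each $\bm{\mu}^{(N)}$ a finitely supported sequence, such that $\|h_{\overline{\bm\lambda}}-q_N\|_{\X}\to 0$. If $q_N$ has degree $M_N$, then $\H_{\bm{\mu}^{(N)}}$ maps $\d$ into the space of polynomials of degree at most $M_N$, so by Theorem \ref{main00} it is a bounded operator of finite rank. The quantitative point to extract from the proof of Theorem \ref{main00} is the norm bound $\|\H_{\bm\eta}\|\lesssim|\eta_0|+\bigl\||h_{\overline{\bm\eta}}'|^2dA\bigr\|_{CM(\d)}$, valid whenever $h_{\overline{\bm\eta}}\in\X$; in particular $\|\H_{\bm\eta}\|\to 0$ as $\|h_{\overline{\bm\eta}}\|_{\X}\to 0$. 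Applying this to $\bm\eta=\bm\lambda-\bm{\mu}^{(N)}$, and using the linearity of $\bm\lambda\mapsto\H_{\bm\lambda}$ together with $h_{\overline{\bm\lambda-\bm{\mu}^{(N)}}}=h_{\overline{\bm\lambda}}-q_N$, gives $\|\H_{\bm\lambda}-\H_{\bm{\mu}^{(N)}}\|=\|\H_{\bm\lambda-\bm{\mu}^{(N)}}\|\to 0$. Hence $\H_{\bm\lambda}$ is an operator-norm limit of finite-rank operators, and so is compact on $\d$.

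For the necessity I would route the argument through the bilinear form $T_{h_{\overline{\bm\lambda}}}$, as in Theorem \ref{main00}. If $\H_{\bm\lambda}$ is compact it is in particular bounded, so Theorem \ref{main00} already yields that $h_{\overline{\bm\lambda}}$ is analytic, lies in $\X$, and that $T_{h_{\overline{\bm\lambda}}}$ is a bounded bilinear form on $\d$. From the identities (\ref{b1})--(\ref{b2}) one rewrites, for $f,g\in\mathcal{P}$,
\begin{align*}
T_{h_{\overline{\bm\lambda}}}(f,g)=\ & f(0)g(0)\overline{\lambda_0}+\langle\H_{\bm\lambda}f,\widetilde g\rangle_{\d}+\langle\H_{\bm\lambda}g,\widetilde f\rangle_{\d}\\
& -(\H_{\bm\lambda}f)(0)\,\overline{\widetilde g(0)}-(\H_{\bm\lambda}g)(0)\,\overline{\widetilde f(0)},
\end{align*}
where $\widetilde g(z)=\overline{g(\overline z)}$ is the conjugate-linear isometric involution of $\d$ that conjugates the Taylor coefficients. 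Given then a bounded sequence $\{(f_n,g_n)\}\subset\d\times\d$ with $f_n\to 0$ and $g_n\to 0$ weakly in $\d$: the involutes $\widetilde f_n$, $\widetilde g_n$ stay bounded; compactness of $\H_{\bm\lambda}$ forces $\|\H_{\bm\lambda}f_n\|_{\d}\to 0$ and $\|\H_{\bm\lambda}g_n\|_{\d}\to 0$; and, since point evaluation is bounded on $\d$, each of $f_n(0)$, $g_n(0)$, $(\H_{\bm\lambda}f_n)(0)$, $(\H_{\bm\lambda}g_n)(0)$ tends to $0$. It follows that $T_{h_{\overline{\bm\lambda}}}(f_n,g_n)\to 0$, so $T_{h_{\overline{\bm\lambda}}}$ is a compact bilinear form on $\d$; Theorem \ref{Bi D}$(ii)$ then gives $h_{\overline{\bm\lambda}}\in\X_0$, i.e.\ $|h_{\overline{\bm\lambda}}'|^2dA$ is a vanishing Carleson measure for $\d$.

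I do not anticipate a genuine obstacle: both implications are structural upgrades of the proof of Theorem \ref{main00}, the density of $\mathcal{P}$ in $\X_0$ being true by definition. The two places that ask for mild care are (a) making the operator-norm estimate $\|\H_{\bm\eta}\|\lesssim|\eta_0|+\bigl\||h_{\overline{\bm\eta}}'|^2dA\bigr\|_{CM(\d)}$ explicit, which amounts to reading off the constants from the Cauchy--Schwarz and Carleson-measure estimates in that proof; and (b) checking in the necessity direction that the scalar correction terms in the displayed identity are harmless, which they are because $\d$ has bounded point evaluations and a compact $\H_{\bm\lambda}$ sends weakly null sequences to norm null sequences.
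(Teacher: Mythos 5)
Your proof is correct, and the two halves relate to the paper differently. For the necessity you take essentially the paper's route: pass from compactness of $\H_{\bm\lambda}$ to compactness of the bilinear form $T_{h_{\overline{\bm\lambda}}}$ via the identities (\ref{b1})--(\ref{b2}) and invoke Theorem \ref{Bi D}(ii); your version is in fact more careful than the paper's, since you write out the exact identity expressing $T_{h_{\overline{\bm\lambda}}}(f,g)$ in terms of $\langle\H_{\bm\lambda}f,\widetilde g\rangle_{\d}$ and $\langle\H_{\bm\lambda}g,\widetilde f\rangle_{\d}$ and check the scalar correction terms, and you work with the operative (weak-null) notion of compactness that Theorem \ref{Bi D}(ii) actually requires, whereas the paper's stated definition of a compact bilinear form (precompactness of $T_b(B_\d\times B_\d)$ in $\mathbb{C}$) is vacuous for bounded forms. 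For the sufficiency your route is genuinely different: the paper uses $h_{\overline{\bm\lambda}}\in\X_0$ only through the compactness of the embedding $I_d:\d\to L^2(\D,|h_{\overline{\bm\lambda}}'|^2dA)$, applying the estimate from the proof of Theorem \ref{main00} to a bounded sequence tending to zero on compacta and concluding $\|\H_{\bm\lambda}f_k\|_\d\to 0$ directly; you instead use the definition of $\X_0$ as the $\X$-closure of the polynomials, observe that a finitely supported symbol gives a finite-rank operator, and exhibit $\H_{\bm\lambda}$ as an operator-norm limit of finite-rank operators via the quantitative bound $\|\H_{\bm\eta}\|\lesssim|\eta_0|+\bigl\||h_{\overline{\bm\eta}}'|^2dA\bigr\|_{CM(\d)}$ extracted from the proof of Theorem \ref{main00}. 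Both arguments are sound; the paper's is marginally shorter because it needs no approximating sequence, while yours buys a slightly stronger conclusion (membership of $\H_{\bm\lambda}$ in the norm closure of the finite-rank operators, with explicit control of the rate by $\|h_{\overline{\bm\lambda}}-q_N\|_{\X}$) and avoids having to verify that weak convergence in $\d$ upgrades to norm convergence in $L^2(|h_{\overline{\bm\lambda}}'|^2dA)$.
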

\begin{proof}
 Suppose $h_{\overline{\bm \lambda}}\in \X_0$.  Then the identity map  $$I_d: \d\to L^2(\D, |h'_{\overline{\bm \lambda}}|^2dA)$$ is compact.  Let $\{f_k\}_{k=1}^\infty$ be a bounded sequence in $\d$ such that $\{f_k\}_{k=1}^\infty$ tends
to 0 uniformly in compact subsets of $\D$ as $k\to \infty$.
From the proof of Theorem \ref{main00}, we obtain
\begin{align*}
& |\langle\H_{\bm \lambda} f_k,g\rangle_{\mathcal{D}}|\\
\lesssim & |\lambda_0| |f_k(0)| |g(0)|+\|g\|_{\mathcal{D}}\Big(\int_\D\Big|\frac{f_k(z)-f_k(0)}{z}\Big|^2|h'_{\overline{\bm \lambda}} (z)|^2 dA(z)\Big)^{\frac12}\\
& + \|g\|_\d \Big(\int_\D |f_k(z)|^2|h'_{\overline{\bm \lambda}} (z)|^2 dA(z)\Big)^{\frac12}
\end{align*}
for all $g\in \d$. Hence, for any $\varepsilon>0$,  there is an integer $k_0$ such that
$$
|\langle\H_{\bm \lambda} f_k,g\rangle_{\mathcal{D}}|\lesssim \|g\|_\d \ \  \varepsilon
$$
for all $k>k_0$ and all $g\in \d$. Then $\|\H_{\bm \lambda} f_k\|_\d \to 0$ as $k\to \infty$. Thus   $\H_{\bm \lambda}$ is compact  on $\mathcal{D}$.

Conversely, suppose $\H_{\bm \lambda}$ is compact  on $\mathcal{D}$.  For a bounded sequence  $\{(f_n, g_n)\}_{n=1}^\infty \subseteq \d\times \d$, both $\{f_n\}$ and $\{g_n\}$ are bounded sequences in $\d$.
Then both  $\{\H_{\bm \lambda}(f_n)\}$ and $\{\H_{\bm \lambda}(g_n)\}$ have convergent subsequences.
Because of (\ref{b1}) and (\ref{b2}), $T_{h_{\overline{\bm \lambda}}}$ extends to a compact  bilinear form  on $\d$.
Then  Theorem \ref{Bi D} yields $h_{\overline{\bm \lambda}}\in \X_0$. The proof is complete.
\end{proof}

For the compactness of $\C_{\bm{\eta}}$ on $\d$, we also have the following conclusion.
\begin{theor}\label{Ce 2}
Suppose ${\bm \eta}=\{\eta_n\}_{n=0}^\infty$ is a sequence of complex numbers. Then the following conditions are equivalent.
\begin{enumerate}
   \item [(i)] The Ces\`aro type operator $\Ce$ is compact  on $\mathcal{D}$.
    \item [(ii)] The reproducing kernel thesis holds; that is, $$
  \lim_{t\to 1^{-}}\|\Ce k_t\|_{\mathcal{D}}=0,
   $$
    where $k_t$ is the normalized reproducing kernel of $\d$ at $t$ in $[0,1)$.
   \item [(iii)]  The Widom type condition is true; that is,   $$
   \sum^\infty_{n=m}n |\eta_n|^2=o\left(\frac{1}{\log (m+2)}\right).
   $$
\end{enumerate}
\end{theor}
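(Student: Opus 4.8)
The plan is to prove Theorem \ref{Ce 2} by establishing the cycle of implications $(i)\Rightarrow(ii)\Rightarrow(iii)\Rightarrow(i)$, mirroring the structure of the proof of Theorem \ref{Ce 1} but tracking the "little-oh" refinements throughout. The implication $(i)\Rightarrow(ii)$ should be essentially immediate: since $\|k_t\|_\d=1$ and, as $t\to 1^-$, the normalized reproducing kernels $k_t$ tend to $0$ uniformly on compact subsets of $\D$ (because $K_t(t)=1+\log\frac{1}{1-t^2}\to\infty$ while $|K_t(z)|$ stays bounded on compacta), compactness of $\Ce$ forces $\|\Ce k_t\|_\d\to 0$.

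For $(ii)\Rightarrow(iii)$, I would reuse the exact lower bound computed in the proof of Theorem \ref{Ce 1}, namely
\begin{align*}
\|\Ce k_t\|^2_{\mathcal{D}}\gtrsim \left(\log\frac{e}{1-t}\right)^{-1} t^{2m}(\log(m+1))^2\sum^\infty_{n=m}(n+1)|\eta_{n+1}|^2,
\end{align*}
valid for every positive integer $m$ and every $t\in[0,1)$. Choosing $t=t_m:=\frac{m}{m+1}$ gives $t_m^{2m}\asymp 1$ and $\log\frac{e}{1-t_m}\asymp\log(m+1)$, so that
$$
\log(m+1)\sum^\infty_{n=m}(n+1)|\eta_{n+1}|^2\lesssim \|\Ce k_{t_m}\|^2_{\mathcal{D}}.
$$
Since $t_m\to 1^-$ as $m\to\infty$, hypothesis (ii) makes the right-hand side tend to $0$, which is exactly $\sum^\infty_{n=m}n|\eta_n|^2=o\big(1/\log(m+2)\big)$ after the standard reindexing. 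I should remark that the same argument shows the seemingly stronger statement with the supremum over $t\in[t_m,1)$ is also controlled, but the single choice $t=t_m$ suffices.

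The implication $(iii)\Rightarrow(i)$ is where the real work lies, and it is the step I expect to be the main obstacle. The idea is to quantify the proof of $(iii)\Rightarrow(i)$ in Theorem \ref{Ce 1}: split $\bm\eta$ into a "tail part" and a finite-rank "head part." Fix $\varepsilon>0$ and use (iii) to choose $N$ so large that $\sum_{n=m}^\infty n|\eta_n|^2\leq \varepsilon/\log(m+2)$ for all $m\geq N$. Write $\bm\eta=\bm\eta^{(N)}+\bm\eta_{(N)}$ where $\bm\eta^{(N)}=\{\eta_n\chi_{\{n<N\}}\}$ has finitely many nonzero entries, so $\C_{\bm\eta^{(N)}}$ is a finite-rank (hence compact) operator on $\d$, while the tail $\bm\eta_{(N)}=\{\eta_n\chi_{\{n\geq N\}}\}$ satisfies the big-oh Widom condition with constant $\varepsilon$ uniformly in $m$. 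Running the boundedness estimate from the proof of Theorem \ref{Ce 1} — the chain using \eqref{b27}, condition (iii), and the Hilbert double inequality (Theorem \ref{Hilbert D}) — on $\C_{\bm\eta_{(N)}}$ but carefully carrying the factor $\varepsilon$ through each application of the Widom bound, one obtains $\|\C_{\bm\eta_{(N)}}\|_{\d\to\d}\lesssim \varepsilon^{1/2}$ with an absolute implied constant. Since $\Ce=\C_{\bm\eta^{(N)}}+\C_{\bm\eta_{(N)}}$ is a compact operator plus an operator of norm $\lesssim\varepsilon^{1/2}$, and $\varepsilon>0$ is arbitrary, $\Ce$ lies in the norm closure of the finite-rank operators and is therefore compact on $\d$. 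The delicate point is to verify that every estimate in the Theorem \ref{Ce 1} argument is genuinely linear in the Widom constant — in particular that the term $\sum_k\sum_j\frac{|a_k||a_j|}{\log(k+j+1)}$ picks up exactly one factor of $\varepsilon$ from the inner tail sum $\sum_{n\geq\max\{k-1,j-1\}}(n+1)|\eta_{n+1}|^2$ before Theorem \ref{Hilbert D} is applied — and to handle the boundary contribution $|\eta_0 a_0|^2$, which disappears once $N\geq 1$.
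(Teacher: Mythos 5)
Your proposal is correct and follows essentially the same route as the paper: the same trivial $(i)\Rightarrow(ii)$, the same lower bound evaluated at $t_m=\frac{m}{m+1}$ for $(ii)\Rightarrow(iii)$, and for $(iii)\Rightarrow(i)$ your head/tail splitting of the sequence $\bm\eta$ is exactly the paper's finite-rank truncation $\Ce^{(N-1)}$ plus the tail operator, whose norm is shown to be $O(\varepsilon^{1/2})$ by rerunning the Theorem \ref{Ce 1} estimate with the Hilbert-type inequality. The one "delicate point" you flag --- that the tail estimate is linear in the Widom constant --- is verified in the paper precisely as you anticipate, via $\sum_{n\geq\max\{k-1,j-1,m\}}(n+1)|\eta_{n+1}|^2\lesssim\varepsilon/\log(k+j+m)$.
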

\begin{proof}
$(i)\Rightarrow (ii)$.  Note that $\|k_t\|_\d=1$ for each $t\in [0, 1)$ and $k_t$ tends to zero uniformly in compact subsets of $\D$ as  $t\to 1^{-}$. Then (ii) holds.

$(ii)\Rightarrow (iii)$. Checking the proof of Theorem \ref{Ce 1}, we see
\begin{align}\label{b28}
\log(m+1)  \sum^\infty_{n=m}(n+1)|\eta_{n+1}|^2 \lesssim \| \Ce k_t\|^2_{\mathcal{D}}
\end{align}
for all positive integers $m$ and all $t$ in $[0, 1)$. Taking $t\to 1^{-}$ in (\ref{b28}), we obtain (iii).

$(iii)\Rightarrow (i)$.  Let $m$ be a positive integer. For $f(z)=\sum_{n=0}^\infty a_nz^n$ in  $\d$, consider
 $$
\Ce^{(m)} (f)(z)=\sum_{n=0}^m \Big(\eta_n \sum_{k=0}^n a_k\Big)z^n, \quad z\in\D.
$$
Then $\Ce^{(m)}$ is  a  finite rank operator. Thus,  $\Ce^{(m)}$ is compact on $\d$. Because of (iii), for every $\epsilon>0$, there exists a positive integer $N$ such that
$$
\sum^\infty_{n=m} (n+1)|\eta_{n+1}|^2<  \frac{\epsilon}{\log (m+2)}
$$
for $m>N$. Using  the proof of Theorem \ref{Ce 1}, we have
\begin{align*}
&\sum^\infty_{n=m} (n+1) |\eta_{n+1}|^2 \Big( \sum^{n+1}_{k=1} |a_k| \Big)^2\\
 \lesssim & \sum^\infty_{k=1} \sum^\infty_{j=1} |a_k| |a_j|\sum^\infty_{n=\max\{k-1,j-1, m\}}(n+1)|\eta_{n+1}|^2\\
 \lesssim & \epsilon \sum^\infty_{k=1} \sum^\infty_{j=1}  \frac{|a_k| |a_j|}{\log(k+j+m)}\\
  \lesssim & \epsilon \|f\|_\d^2
\end{align*}
for all $m>N$.  Thus, for $m>N$,
\begin{align*}
&\|(\Ce-\Ce^{(m)}) (f)\|^2_\d\\
\leq & |a_0|^2\sum^\infty_{n=m} (n+1)|\eta_{n+1}|^2+\sum^\infty_{n=m} (n+1) |\eta_{n+1}|^2 \Big( \sum^{n+1}_{k=1} |a_k| \Big)^2\\
\leq &  \epsilon\|f\|_\d^2.
\end{align*}
In other words,   $\|\Ce-\Ce^{(m)}\|\to 0$  as $m\to \infty$. Hence  $\Ce$ is compact  on $\mathcal{D}$. The proof is finished.
\end{proof}

The  compact result  corresponding to Theorem \ref{H1} also holds.

\begin{theor}\label{H2}
Suppose ${\bm \lambda}=\{\lambda_n\}^\infty_{n=0}$  is a decreasing sequence of positive real numbers. Then the following conditions are equivalent.
\begin{enumerate}
   \item [(i)] The Hankel type  operator $\H_{\bm \lambda}$ is compact  on $\mathcal{D}$.
   \item [(ii)]  The reproducing kernel thesis holds; that is,  $$
 \lim_{t\to 1^-}\|\H_{\bm \lambda} k_t\|_{\mathcal{D}}<\infty,
   $$
    where $k_t$ is the normalized reproducing kernel of $\d$ at $t$ in $[0,1)$.
   \item [(iii)]  The Widom type condition is true; that is,  $$
   \sum^\infty_{n=m}n \lambda_n^2=o\left(\frac{1}{\log (m+2)}\right).
   $$
\end{enumerate}
\end{theor}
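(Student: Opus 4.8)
The plan is to prove Theorem \ref{H2} by following the same three-implication scheme used for Theorem \ref{H1}, upgrading each ``$O$''-type estimate to its ``$o$''-type (vanishing) analogue. The cheap implication $(i)\Rightarrow(ii)$ is immediate: since $\|k_t\|_\d=1$ for all $t\in[0,1)$ and $k_t\to 0$ uniformly on compact subsets of $\D$ as $t\to 1^-$, compactness of $\H_{\bm\lambda}$ forces $\|\H_{\bm\lambda}k_t\|_\d\to 0$; note the statement of $(ii)$ should read $\lim_{t\to 1^-}\|\H_{\bm\lambda}k_t\|_\d=0$. For $(ii)\Rightarrow(iii)$ I would reuse verbatim the chain of lower bounds from the proof of Theorem \ref{H1}: for a decreasing positive sequence,
\[
\|\H_{\bm\lambda}k_t\|_\d^2\gtrsim \Big(\log\tfrac{e}{1-t}\Big)^{-1}t^{2m}(\log(m+1))^2\sum_{n=m}^\infty (n+1)\lambda_{2n+1}^2 ,
\]
and then substitute $t=\frac{m}{m+1}$ so that $t^{2m}\approx 1$ and $\log\frac{e}{1-t}\approx\log(m+1)$, giving $\log(m+1)\sum_{n\ge m}(2n+1)\lambda_{2n+1}^2\lesssim \|\H_{\bm\lambda}k_{m/(m+1)}\|_\d^2$, which tends to $0$ by $(ii)$; combining the odd- and even-indexed pieces as before yields $\sum_{n\ge m}n\lambda_n^2=o(1/\log(m+2))$.

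For the substantive direction $(iii)\Rightarrow(i)$ I would imitate the boundedness proof but approximate $\H_{\bm\lambda}$ in operator norm by finite-rank truncations. Define $\H_{\bm\lambda}^{(m)}(f)(z)=\sum_{j=0}^m\big(\sum_{k=0}^\infty\lambda_{j+k}a_k\big)z^j$, which is finite rank (each coefficient is a bounded linear functional on $\d$ by the estimate $|\sum_k\lambda_{j+k}a_k|\lesssim\|f\|_\d$ already established from \eqref{q1}), hence compact. It remains to show $\|\H_{\bm\lambda}-\H_{\bm\lambda}^{(m)}\|\to 0$. Splitting $\|(\H_{\bm\lambda}-\H_{\bm\lambda}^{(m)})f\|_\d^2$ exactly as in \eqref{q3} into the ``diagonal'' Ces\`aro-type piece $\sum_{n\ge m}(n+1)\big(\sum_{k=0}^{n+1}\lambda_{n+k+1}|a_k|\big)^2$ and the ``tail'' piece $\sum_{n\ge m}(n+1)\big(\sum_{k>n+1}\lambda_{n+k+1}|a_k|\big)^2$ (plus the harmless $|a_0|^2$ term), I would bound the first by $\|\C_{\bm\lambda}^{(m)}f_2\|_\d^2$ where $f_2=\sum|a_k|z^k$, and invoke the estimate from the proof of Theorem \ref{Ce 2} that under the $o$-condition this is $\lesssim\epsilon\|f\|_\d^2$ for $m$ large; the second (tail) piece is handled by Hölder plus Fubini as in \eqref{q5}, giving a bound $\lesssim\|f\|_\d^2\sum_{k\ge m}k\lambda_k^2$, which is $o(1)$ directly from $(iii)$ (indeed $k\lambda_k^2\le \sum_{n\ge k}n\lambda_n^2\to 0$, and the Fubini rearrangement reduces the whole sum to the tail $\sum_{k\ge m}k\lambda_k^2$). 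Letting $m\to\infty$ gives norm convergence of the truncations, hence compactness.

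The main obstacle is the book-keeping in the $(iii)\Rightarrow(i)$ step: one must run the decomposition \eqref{q3}--\eqref{q5} with the summation starting at $n=m$ rather than $n=0$ and check that every ``$\lesssim$'' picks up a genuine $o(1)$ factor as $m\to\infty$, in particular that the application of Hilbert's double inequality (Theorem \ref{Hilbert D}) through $\C_{\bm\lambda}^{(m)}$ is uniform in $m$ — this is precisely what the refined estimate in the proof of Theorem \ref{Ce 2} supplies, so the key is to quote that estimate rather than re-derive it. Everything else is a routine transcription of the arguments already given for Theorems \ref{H1}, \ref{Ce 1} and \ref{Ce 2}, with $O$ replaced by $o$ and $\sup_t$ replaced by $\lim_{t\to1^-}$.
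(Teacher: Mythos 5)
Your proposal is correct and takes essentially the same approach as the paper: the paper in fact omits the proof of Theorem \ref{H2}, saying only that it follows by combining the proof of Theorem \ref{H1} with the arguments of Theorem \ref{Ce 2}, and your write-up (the kernel lower bound with $t=\frac{m}{m+1}$ for $(ii)\Rightarrow(iii)$, and the finite-rank truncation with the decomposition \eqref{q3}--\eqref{q5} run from $n=m$ onward for $(iii)\Rightarrow(i)$) is precisely that combination. You are also right that condition (ii) should read $\lim_{t\to 1^-}\|\H_{\bm\lambda}k_t\|_{\mathcal D}=0$.
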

\begin{proof}
By   the proof of  Theorem \ref{H1}, the arguments here are  similar to  that  of Theorem \ref{Ce 2}. We omit it.
\end{proof}

\section{Random Hankel type operators on $\d$ and a result related to   Rudin's $\Lambda(p)$ set}

In this section,  we  prove Theorem \ref{random} and Corollary \ref{rudin}. Many known real random variables sequences $(X_n)_{n\in\mathbb{N}}$ satisfy the conditions in Theorem \ref{random}. First of all, all bounded mean zero random variables are included in Thereom \ref{random}. The typical example is the
Bernoulli random variables sequence $(X_n)_{n\in\mathbb{N}}$. In other words,  $(X_n)_{n\in\mathbb{N}}$ is independent such that $\mathbb{P}(X_n=1)=\mathbb{P}(X_n=-1)=1/2$. In addition, for the classical independent normal distribution sequence $X_n=N(0,1)$, we have $\mathbb{E}[X_n]=0$ and $\mathbb{E}[X_n^4]=3$ (cf. \cite[p. 2]{Le}). Therefore, Theorem \ref{random}  can be applied to the corresponding random Gaussian analytic Dirichlet functions.

For the $\boldsymbol{\lambda}=\{\lambda_n\}_{n\in\mathbb{N}}$ complex number sequence and a sequence of i.i.d. real random variables
$(X_n)_{n\in\mathbb{N}}$ with $\mathbb{E}[X_n]=0$ and $\mathbb{E}[X_n^4]<\infty$.
If $h_{\bm \lambda}\in\mathcal{D}$, then $\sum_{n=0}^\infty |\lambda_n|^2<\infty$. Moreover, by i.i.d. and $\mathbb{E}[X_n^2]\leq \sqrt{\mathbb{E}[X_n^4]}$, we have
\[
\mathbb{E}\big[\sum_{n=0}^\infty |\lambda_n X_n|^2\big]=\sum_{n=0}^\infty|\lambda_n|^2\mathbb{E}[|X_n|^2]<\infty
\]
and almost surely
\[
h_{\bm \lambda,\omega}(z)=\sum_{n=0}^\infty X_n(\omega)\lambda_n z^n
\]
is analytic  on the unit disk.

For our proof, we need the following elementary inequalities which may be well known. However, we cannot locate literature.
For the sake of completeness, we have included a brief proof.
\begin{lemma}\label{ele}
	Let $(X_n)_{n\in\mathbb{N}}$ be a sequence of i.i.d. real random variables with $\mathbb{E}[X_n]=0$ and $\mathbb{E}[X_n^4]=1$. Then for all finite complex numbers $a_1,\cdots, a_n$, we have
	\[
	\mathbb{E}\bigg[\bigg|\sum_{i=1}^n a_i X_i\bigg|^4\bigg]\leq 3\bigg[\sum_{i=1}^n |a_i|^2\bigg]^2.
	\]
\end{lemma}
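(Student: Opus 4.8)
The plan is to expand the fourth moment directly and exploit independence and the mean-zero hypothesis to kill the cross terms. Writing $S=\sum_{i=1}^n a_iX_i$, I would first reduce to the real case: since the $X_i$ are real, $|S|^2 = S\overline{S}$ where $\overline{S}=\sum_i \overline{a_i}X_i$, so $|S|^4 = \big(\sum_{i,j}a_i\overline{a_j}X_iX_j\big)\big(\sum_{k,l}a_k\overline{a_l}X_kX_l\big)$. Taking expectations gives
\[
\mathbb{E}\big[|S|^4\big]=\sum_{i,j,k,l}a_i\overline{a_j}a_k\overline{a_l}\,\mathbb{E}[X_iX_jX_kX_l].
\]
By independence and $\mathbb{E}[X_m]=0$, the expectation $\mathbb{E}[X_iX_jX_kX_l]$ vanishes unless the indices pair up or all coincide: the surviving contributions come from $i=j=k=l$, giving $\sum_i |a_i|^4\,\mathbb{E}[X_i^4]=\sum_i|a_i|^4$ (using $\mathbb{E}[X_m^4]=1$), and from the three ways of splitting $\{i,j,k,l\}$ into two pairs of equal indices with the two pairs distinct, each contributing a term controlled by $\big(\mathbb{E}[X_m^2]\big)^2$.

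The key inequality I need along the way is $\mathbb{E}[X_m^2]\le \sqrt{\mathbb{E}[X_m^4]}=1$, which is just Cauchy--Schwarz (or Jensen) applied to $X_m^2$ against the constant $1$. With this, I would bound each of the three pair-partition sums. The relevant one is $\sum_{i\ne k} a_i\overline{a_i}a_k\overline{a_k}\,\mathbb{E}[X_i^2]\mathbb{E}[X_k^2]\le \sum_{i\ne k}|a_i|^2|a_k|^2$; there are also partitions producing terms like $\sum_{i\ne k}a_i^2\overline{a_k}^2\,\mathbb{E}[X_i^2]\mathbb{E}[X_k^2]$, whose modulus is again at most $\sum_{i\ne k}|a_i|^2|a_k|^2$. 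Adding the diagonal term $\sum_i|a_i|^4$ and the (at most) three pair-partition sums, each of modulus at most $\sum_{i\ne k}|a_i|^2|a_k|^2\le \big(\sum_i|a_i|^2\big)^2$, I obtain
\[
\mathbb{E}\big[|S|^4\big]\le \sum_i|a_i|^4 + 3\sum_{i\ne k}|a_i|^2|a_k|^2\le 3\Big(\sum_i|a_i|^2\Big)^2,
\]
the last step because $\big(\sum_i|a_i|^2\big)^2=\sum_i|a_i|^4+\sum_{i\ne k}|a_i|^2|a_k|^2$, so $\sum_i|a_i|^4+3\sum_{i\ne k}|a_i|^2|a_k|^2 \le 3\sum_i|a_i|^4+3\sum_{i\ne k}|a_i|^2|a_k|^2=3\big(\sum_i|a_i|^2\big)^2$.

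The only genuine subtlety — the ``main obstacle'', though it is a mild one — is the bookkeeping of which index-coincidence patterns survive in $\mathbb{E}[X_iX_jX_kX_l]$ and keeping the complex conjugates attached to the right $a$'s, so that I correctly see that every surviving off-diagonal term has modulus $\le |a_i|^2|a_k|^2$ for the appropriate pair $\{i,k\}$. I would handle this cleanly by noting that $\mathbb{E}[X_iX_jX_kX_l]=0$ whenever some index is isolated (appears exactly once among $i,j,k,l$), since then independence factors out a mean-zero term; the nonzero cases are exactly ``all four equal'' and ``two distinct values, each with multiplicity two'', and in the latter case there are $\binom{4}{2}/?$—more simply, three ways to choose which two of the four slots carry the first repeated value. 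Once this combinatorial skeleton is written down, the bound follows by triangle inequality and $\mathbb{E}[X_m^2]\le 1$, completing the proof.
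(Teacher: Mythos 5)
Your proof is correct and follows essentially the same route as the paper's: expand the fourth moment, use independence and $\mathbb{E}[X_m]=0$ to discard every term with an isolated index, bound $\mathbb{E}[X_i^2X_j^2]\leq\sqrt{\mathbb{E}[X_i^4]\mathbb{E}[X_j^4]}=1$ by Cauchy--Schwarz, and count the surviving pair-partitions to reach $\sum_i|a_i|^4+3\sum_{i\neq k}|a_i|^2|a_k|^2\leq 3\big(\sum_i|a_i|^2\big)^2$. The paper merely organizes the bookkeeping by first writing $|A_n|^2=\sum_i|a_i|^2X_i^2+2\sum_{i<j}\mathrm{Re}(a_i\bar a_j)X_iX_j$ and then squaring, which is the same computation in different packaging.
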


\begin{proof}
	For any fixed natural number $n\geq 1$,
	let
	$
	A_n=\sum_{i=1}^n a_i X_i.
	$
	It is easy to see
	\[
	|A_n|^2=\sum_{i=1}^n |a_i|^2X_i^2+2\sum_{i<j}Re(a_i\bar{a}_j)X_iX_j.
	\]
Note that $(X_n)_{n\in\mathbb{N}}$ is a sequence of i.i.d. real random variables with $\mathbb{E}[X_n]=0$  for each $n$. Then
	\begin{align*}
		\mathbb{E}\big[|A_n|^4\big]&=\mathbb{E}\left[\bigg(\sum_{i=1}^n |a_i|^2X_i^2\bigg)^2\right]+4\mathbb{E}\left[\bigg(
		\sum_{i<j}Re(a_i\bar{a}_j)X_iX_j
		\bigg)^2\right]\\
		 &=\sum_{i,j}|a_i|^2|a_j|^2\mathbb{E}[X_i^2X_j^2]+4\sum_{i<j}(Re(a_i\bar{a}_j))^2\mathbb{E}[X_i^2X_j^2]\\
		&\leq \sum_{i,j}|a_i|^2|a_j|^2\mathbb{E}[X_i^2X_j^2]+4\sum_{i<j}|a_i\bar{a}_j|^2\mathbb{E}[X_i^2X_j^2].
	\end{align*}
Observe that $\big(\mathbb{E}[X_i^2X_j^2]\big)^2\leq \mathbb{E}[X_i^4]\mathbb{E}[X_j^4]= 1$, hence
\[
\mathbb{E}\bigg[\bigg|\sum_{i=1}^n a_i X_i\bigg|^4\bigg]\leq 3\bigg[\sum_{i=1}^n|a_i|^2\bigg]^2,
\]
and the  proof is completed.
\end{proof}

Let $f\in H(\D)$. For $0<p<\infty$ and $0\leq r<1$, let
$$
M_p(f, r)=\left(\frac{1}{2\pi}\int_0^{2\pi}|f(re^{i\theta})|^p d\theta\right)^{\frac 1 p},
$$
and
$$
M_\infty (f, r)=\max_{\theta \in [0, 2\pi]}|f(re^{i\theta})|.
$$
Denote by $H^p$ the Hardy space consisting of those functions $f$ in $H(\D)$ with
$$
\|f\|_{H^p}=\sup_{0<r<1} M_p(f, r)<\infty.
$$
It is well known (cf. \cite[p. 3]{EKMR}) that $\d$ is a subset of $H^p$ for any $p>0$.

From  a result of L. Brown and A. Shields \cite[p. 300]{BS}, if an analytic function $f$ satisfies $M_p(f', r)\in L^2([0, 1], dr)$ with $2<p\leq \infty$, then
 $|f'(z)|^2dA(z)$ is a Carleson measure for $\d$. We  strengthen this conclusion as follows.

\begin{lemma}\label{bs}
Let   $2<p\leq \infty$. Suppose   $\phi$ is an analytic function  in the open unit disk $\D$ with $M_p(\phi', r)\in L^2([0, 1], dr)$. Then
$|\phi'(z)|^2dA(z)$ is a vanishing Carleson measure for the Dirichlet space.
\end{lemma}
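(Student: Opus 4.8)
The plan is to strengthen the Brown--Shields argument by tracking a tail estimate rather than merely a finiteness statement. Recall that by Theorem~\ref{Bi D} (or directly by the definition of $\X_0$), it suffices to show that $|\phi'(z)|^2dA(z)$ is a vanishing Carleson measure, which we will do by approximating $\phi$ by its partial sums (or dilates) in the $\X$-norm. Writing $\phi_r(z)=\phi(rz)$, the quantity $M_p(\phi',\rho)\in L^2([0,1],d\rho)$ controls both $\phi\in\X$ and the convergence $\|\phi-\phi_r\|_{\X}\to 0$ as $r\to 1^-$, since $(\phi-\phi_r)'$ has $M_p$-norm concentrated near the boundary, which the $L^2(d\rho)$ integrability makes arbitrarily small. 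Because each dilate $\phi_r$ is analytic on a larger disk, $|\phi_r'|^2dA$ is trivially a vanishing Carleson measure for $\d$; and vanishing Carleson measures form a closed subspace of $CM(\d)$, so $|\phi'|^2dA$ is vanishing as well.

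Concretely, first I would record the elementary Carleson-type bound: for an analytic $g$ on $\D$ and $2<p\le\infty$, Hölder's inequality applied over circles together with the estimate $\int_{\D}|f(z)|^2\,|g(z)|^{2}\,dA(z)\lesssim \|f\|_{\d}^{2}\int_0^1 M_p(g,\rho)^2\,d\rho$ (this is essentially the computation in Brown--Shields \cite[p.~300]{BS}: one splits $|f(\rho e^{i\theta})|^2$ via $L^{p'}$ against $|g|^2\in L^{p/2}$ on each circle, then uses the sharp growth $M_{p'}(f,\rho)\lesssim (1-\rho)^{-(1/2-1/p)}\|f\|_\d$ which follows from $\d\subset H^q$ for all $q$ plus a standard dilation inequality). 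This gives $\|\,|g'|^2dA\,\|_{CM(\d)}\lesssim \int_0^1 M_p(g',\rho)^2\,d\rho$ for every analytic $g$.

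Second, I would apply this bound with $g=\phi-\phi_r$. One has $(\phi-\phi_r)'(z)=\phi'(z)-r\phi'(rz)$, so $M_p\big((\phi-\phi_r)',\rho\big)\le M_p(\phi',\rho)+r\,M_p(\phi',r\rho)$, and since $M_p(\phi',\cdot)$ is nondecreasing and lies in $L^2([0,1])$, dominated convergence gives $\int_0^1 M_p\big((\phi-\phi_r)',\rho\big)^2\,d\rho\to 0$ as $r\to 1^-$. Hence $\|\,|\phi'|^2dA-|\phi_r'|^2dA\,\|$, or more precisely $\|\,|(\phi-\phi_r)'|^2dA\,\|_{CM(\d)}\to 0$; combined with the fact that each $|\phi_r'|^2dA$ is a vanishing Carleson measure (as $\phi_r$ extends analytically past $\overline{\D}$, so the measure is supported with bounded density on $\D$) and that the vanishing Carleson measures are norm-closed in $CM(\d)$ — equivalently, $\X_0$ is closed in $\X$ — we conclude $\phi\in\X_0$, i.e.\ $|\phi'(z)|^2dA(z)$ is a vanishing Carleson measure for $\d$.

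The main obstacle is the first step: making precise the circle-by-circle Hölder estimate and the sharp radial growth $M_{q}(f,\rho)\lesssim(1-\rho)^{-(1/2-1/p)}\|f\|_{\d}$ with the correct exponent, so that the $\rho$-integral converges and reproduces exactly $\int_0^1 M_p(g',\rho)^2\,d\rho$; once that quantitative bound is in hand, the passage to the vanishing statement via dilation and the closedness of $\X_0$ is routine. I do not expect the randomness or $\Lambda(p)$ machinery of the later sections to enter here; this lemma is purely a refinement of the deterministic Brown--Shields sufficient condition, and its role is presumably to feed into the proof of Theorem~\ref{random}.
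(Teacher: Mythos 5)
Your proposal is correct, but it takes a genuinely different route from the paper. The paper verifies the vanishing Carleson property directly through its sequential characterization: it takes a bounded sequence $\{f_m\}$ in $\d$ converging to zero uniformly on compact subsets, splits $\int_{\D}|f_m|^2|\phi'|^2\,dA$ at a radius $\delta$ chosen so that $\int_\delta^1 M_p(\phi',r)^2\,dr<\epsilon$, estimates the outer annulus by H\"older's inequality with exponents $p/2$ and $p/(p-2)$ on the double integral together with the embedding $\d\subset H^{2p/(p-2)}$, and handles the inner disk by uniform convergence. You instead prove the quantitative bound $\|\,|g'|^2dA\,\|_{CM(\d)}\lesssim\int_0^1 M_p(g',\rho)^2\,d\rho$, apply it to $g=\phi-\phi_r$ with a dominated convergence argument in $\rho$, and conclude from the fact that each dilate $\phi_r$ lies in $\X_0$ and that $\X_0$ is closed in $\X$. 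Both arguments rest on the same circle-by-circle H\"older estimate; yours buys a reusable norm inequality, while the paper's is more self-contained, needing neither the equivalence between membership in $\X_0$ and the vanishing Carleson property nor the approximation-by-dilates step.

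One remark on the step you single out as the main obstacle: the growth estimate $M_{q}(f,\rho)\lesssim(1-\rho)^{-(1/2-1/p)}\|f\|_{\d}$ is not the right tool here --- if a genuinely divergent power of $(1-\rho)^{-1}$ entered the circle-by-circle estimate, the resulting $\rho$-integral would no longer be dominated by $\int_0^1 M_p(g',\rho)^2\,d\rho$. What makes the estimate close up is the \emph{uniform} bound $M_{2p/(p-2)}(f,\rho)\le\|f\|_{H^{2p/(p-2)}}\lesssim\|f\|_{\d}$, valid because $\d$ embeds continuously into $H^q$ for every finite $q$ (this is exactly what the paper invokes, citing \cite{EKMR}). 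So the step you flag is easier than you anticipate, provided you use the uniform embedding rather than an $H^2$-type radial growth rate.
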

\begin{proof}
  Let $2<p<\infty$.    Suppose  $\{f_m\}_{m=1}^\infty$ is  a sequence in $\d$ such that $\sup_m \|f_m\|_\d<\infty$ and functions $f_m$ tend to zero uniformly in compact subsets of $\D$ as $m\to \infty$.
Due to  $M_p(\phi', r)\in L^2([0, 1], dr)$, for any $\epsilon>0$, there exists a $\delta$ in $(0, 1)$ such that
$$
\int_\delta^1 \left(\int_0^{2\pi} |\phi'(re^{i\theta})|^p d\theta\right)^{\frac 2 p}dr<\epsilon.
$$
Then
\begin{align*}
 & \int_{\{z\in\D: |z|>\delta\}} |f_m(z)|^2 |\phi'(z)|^2 dA(z)\\
 \approx& \int_\delta^1   \int_0^{2\pi} |f_m(re^{i\theta})|^2 |\phi'(re^{i\theta})|^2  d\theta dr\\
 \lesssim& \left(\int_\delta^1  \left(\int_0^{2\pi} |\phi'(re^{i\theta})|^p d\theta\right)^{\frac 2 p}    dr \right)  \left(\int_\delta^1 \left(\int_0^{2\pi}  |f_m(re^{i\theta})|^{\frac{2p}{p-2}}  d\theta\right)^{\frac {p-2}{p}}  dr\right) \\
 \lesssim& \epsilon\ \|f_m\|_{H^{\frac{2p}{p-2}}}^2 \lesssim  \epsilon\ \|f_m\|_{\d}^2\lesssim  \epsilon.
 \end{align*}
 Since functions $f_m$ tend to zero uniformly in $\{z\in \D:  |z|\leq \delta\}$, there is a positive integer $N$ such that
	$$
	\int_{\{z\in \D:  |z|\leq \delta\}} |f_m(z) \phi'(z)|^2 dA(z) \leq \epsilon
	$$
	for all $m>N$. Consequently,
	$$
	\lim_{m\to \infty}\int_\D |f_m(z) \phi'(z)|^2 dA(z)=0.
	$$
	Thus  $|\phi'(z)|^2dA(z)$  is a vanishing Carleson measure for $\d$. The proof of the case of $p=\infty$ is similar. We omit it.

\end{proof}

Note that if $p=2$ and $\phi\in H(\D)$, then $M_p(\phi', r)\in L^2([0, 1], dr)$ if and only if $\phi \in \d$. In general, the measure
$|\phi'(z)|^2dA(z)$ with $\phi\in\mathcal{D}$ is not a vanishing Carleson measure for the Dirichlet space.

Now, we are ready to prove Theorem  \ref{random}.

\begin{proof}
If  $\mathbb{E}[X_n^4]=0$, then $X_n=0$ almost surely. Hence
without loss of generality, we can assume that $\mathbb{E}[X_n^4]=1$.
Recall that
$$
h_{\overline{\bm\lambda},\omega}(z)=\sum_{n=0}^\infty X_n(\omega)\overline{\lambda_n} z^n.
$$
By Theorem \ref{main00}, we shall show that almost surely
\[
|h_{\overline{\bm\lambda},\omega}'(z)|^2dA(z)
\]
is vanishing Carleson measure for $\d$.
By Lemma \ref{bs}, it is sufficient to show that
\[
\mathbb{E}\bigg[\int_0^1
\bigg(\int_0^{2\pi}|h_{\overline{\bm\lambda},\omega}'(re^{i\theta})|^4\frac{d\theta}{2\pi}\bigg)^{1/2}
dr\bigg]<\infty.
\]
It follows from  Fubini's theorem that
\begin{align*}
&\mathbb{E}\bigg[\int_0^1
\bigg(\int_0^{2\pi}|h_{\overline{\bm\lambda},\omega}'(re^{i\theta})|^4\frac{d\theta}{2\pi}\bigg)^{1/2}
dr\bigg]\\
=&\int_0^1\int_\Omega \bigg(\int_0^{2\pi}|h_{\overline{\bm\lambda},\omega}'(re^{i\theta})|^4\frac{d\theta}{2\pi}\bigg)^{1/2}d\mathbb{P}(w)dr.
\end{align*}
Applying the H\"older inequality with respect to the probability measure $d\mathbb{P}$, we have
\begin{align*}
&\int_0^1\int_\Omega \bigg(\int_0^{2\pi}|h_{\overline{\bm\lambda},\omega}'(re^{i\theta})|^4\frac{d\theta}{2\pi}\bigg)^{1/2}d\mathbb{P}(w)dr\\
\leq &\int_0^1\bigg( \int_\Omega \int_0^{2\pi}|h_{\overline{\bm\lambda},\omega}'(re^{i\theta})|^4\frac{d\theta}{2\pi}d\mathbb{P}(w)\bigg)^{1/2}dr.
\end{align*}
Consequently,
\[
\int_\Omega \int_0^{2\pi}|h_{\overline{\bm\lambda},\omega}'(re^{i\theta})|^4\frac{d\theta}{2\pi}d\mathbb{P}(w)
= \int_0^{2\pi}\int_\Omega|h_{\overline{\bm\lambda},\omega}'(re^{i\theta})|^4d\mathbb{P}(w)\frac{d\theta}{2\pi}.
\]
Recall that
\[
h_{\overline{\bm\lambda},\omega}'(re^{i\theta})=\sum_{n=1}^\infty \overline{\lambda_n} n r^{n-1}e^{i(n-1)\theta}X_n(\omega)
\]
and Lemma \ref{ele},  we get
\begin{align*}
&\int_0^1\left(\int_\Omega\bigg|\sum_{n=1}^\infty \overline{\lambda_n} n r^{n-1}e^{i(n-1)\theta}X_n\bigg|^4d\mathbb{P}(w)\right)^\frac12 dr\\
\leq& 3\sum_{n=1}^\infty  \int_0^1 | \overline{\lambda_n} n r^{n-1}|^2dr.
\end{align*}
Therefore, there is a constant $C$ such that
\begin{align*}
&\mathbb{E}\bigg[\int_0^1\bigg(\int_0^{2\pi}|h_{\bm \lambda,\omega}'(re^{i\theta})|^4\frac{d\theta}{2\pi}\bigg)^{1/2}dr\bigg]\\
\leq &
C\sum_{n=1}^\infty n|\lambda_n|^2<\infty.
\end{align*}
This completes the whole proof.
\end{proof}

To prove Corollary \ref{rudin}, we start with a simple observation.

\begin{lemma}\label{vcar}
	Suppose $p>2$, $E$ is a Rudin's $\Lambda(p)$ set and
	$\boldsymbol{\lambda}=\{\lambda_{n}: n\in E\}$  is a sequence of complex numbers. If $\sum_{n\in E}n|\lambda_n|^2<\infty$, then $|h_{\overline{\bm \lambda}}'(z)|^2dA(z)$ is a vanishing Carleson measure for the Dirichlet space, where
	$
	h_{\overline{\bm \lambda}}(z)=\sum_{n\in E}\overline{\lambda_n} z^n.
	$
\end{lemma}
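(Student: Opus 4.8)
The plan is to reduce everything to Lemma~\ref{bs}: I will show that the analytic function $\phi=h_{\overline{\bm\lambda}}$ satisfies $M_p(h_{\overline{\bm\lambda}}',r)\in L^2([0,1],dr)$ for the \emph{same} exponent $p>2$ witnessing that $E$ is a Rudin's $\Lambda(p)$ set, and then quote Lemma~\ref{bs} to conclude that $|h_{\overline{\bm\lambda}}'(z)|^2dA(z)$ is a vanishing Carleson measure for $\mathcal{D}$. So the whole argument is: (a) bound the circular $L^p$ norm of $h_{\overline{\bm\lambda}}'$ by its $L^2$ norm using the $\Lambda(p)$ inequality, and (b) integrate in $r$ and recognize the resulting series as $\sum_{n\in E} n|\lambda_n|^2$.

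For step (a), fix $r\in[0,1)$ and write
\[
h_{\overline{\bm\lambda}}'(re^{i\theta})=\sum_{n\in E}\overline{\lambda_n}\,n\,r^{n-1}e^{i(n-1)\theta}
=e^{-i\theta}\sum_{n\in E}\overline{\lambda_n}\,n\,r^{n-1}e^{in\theta}=:e^{-i\theta}\psi_r(\theta).
\]
Since $\sum_{n\in E}n|\lambda_n|^2<\infty$, for each $r<1$ we have $\sum_{n\in E}n^2r^{2(n-1)}|\lambda_n|^2<\infty$, so $\psi_r\in L^2(\mathbb{T})$ with Fourier spectrum contained in $E$. The definition of a $\Lambda(p)$ set in the excerpt is phrased for trigonometric polynomials with spectrum in $E$; a routine density argument (approximate $\psi_r$ by its partial sums, which are such polynomials, use the uniform bound $\|S_N\psi_r\|_{L^p(\mathbb{T})}\le C\|\psi_r\|_{L^2(\mathbb{T})}$, and pass to the limit via reflexivity of $L^p$ / weak lower semicontinuity of the $L^p$ norm) extends the inequality to all of $L^2$ with spectrum in $E$. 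Hence $\|\psi_r\|_{L^p(\mathbb{T})}\le C\|\psi_r\|_{L^2(\mathbb{T})}$ with $C$ independent of $r$, and since multiplication by $e^{-i\theta}$ preserves every $L^q(\mathbb{T})$ norm, we get $M_p(h_{\overline{\bm\lambda}}',r)\lesssim M_2(h_{\overline{\bm\lambda}}',r)$ for every $r\in[0,1)$.

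For step (b), integrate and compute explicitly by orthogonality:
\[
\int_0^1 M_p(h_{\overline{\bm\lambda}}',r)^2\,dr\lesssim\int_0^1 M_2(h_{\overline{\bm\lambda}}',r)^2\,dr
=\sum_{n\in E}n^2|\lambda_n|^2\int_0^1 r^{2n-2}\,dr
=\sum_{n\in E}\frac{n^2}{2n-1}|\lambda_n|^2\thickapprox\sum_{n\in E}n|\lambda_n|^2<\infty.
\]
Thus $M_p(h_{\overline{\bm\lambda}}',\cdot)\in L^2([0,1],dr)$, and Lemma~\ref{bs} gives the claim.

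The only step that is not a direct computation is the extension of the $\Lambda(p)$ inequality from trigonometric polynomials to $L^2$ functions with spectrum in $E$, and the accompanying observation that $\psi_r$ lies in that class for each $r<1$; I expect this to be the "hard part," though it is standard. Everything else — the $e^{-i\theta}$ shift trick, the term-by-term integration in $r$, and the comparison $\tfrac{n^2}{2n-1}\thickapprox n$ — is mechanical, so the final write-up should be quite short.
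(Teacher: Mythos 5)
Your proof is correct and follows essentially the same route as the paper's: factor out $e^{-i\theta}$ so that the spectrum of $h_{\overline{\bm\lambda}}'(re^{i\theta})$ lies in $E$, apply the $\Lambda(p)$ inequality to bound $M_p$ by $M_2$ uniformly in $r$, integrate in $r$ to recognize $\sum_{n\in E} n|\lambda_n|^2$, and invoke Lemma~\ref{bs}. The only difference is that you explicitly flag the density step extending the $\Lambda(p)$ inequality from trigonometric polynomials to $L^2$ functions with spectrum in $E$, which the paper applies tacitly.
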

\begin{proof}
	Let $p>2$. For any $0<r<1$,
	observe that
	\[
	h_{\overline{\bm \lambda}}'(re^{i\theta})=e^{-i\theta}\sum_{n\in E}\overline{\lambda_n} n r^{n-1}e^{in \theta},
	\]
	we have
	\[
	\|h_{\overline{\bm \lambda}}'(re^{i\theta})\|_{L^p([0, 2\pi], d\theta)}=
	\bigg\|\sum_{n\in E}\overline{\lambda_n} n r^{n-1}e^{in \theta}\bigg\|_{L^p([0, 2\pi], d\theta)}.
	\]
	Since $E$ is a Rudin's $\Lambda(p)$,
	\begin{align*}
	&\bigg\|\sum_{n\in E}\overline{\lambda_n} n r^{n-1}e^{in \theta}\bigg\|^2_{L^p([0, 2\pi], d\theta)}\\
	\leq & C \bigg\|\sum_{n\in E}\overline{\lambda_n} n r^{n-1}e^{in \theta}\bigg\|^2_{L^2([0, 2\pi], d\theta)}\\
	=&C \sum_{n\in E}|\lambda_n|^2n^2r^{2(n-1)}.
    \end{align*}
	Recall that $\sum_{n\in E}n|\lambda_n|^2<\infty$, therefore,
	\begin{align*}
		&\int_0^1 \bigg\|\sum_{n\in E}\overline{\lambda_n} n r^{n-1}e^{in \theta}\bigg\|^2_{L^p([0, 2\pi], d\theta)}dr\\
\leq &C \int_0^1 \sum_{n\in E}|\lambda_n|^2n^2r^{2(n-1)}dr\\
<& +\infty.
	\end{align*}
	The lemma then follows from Lemma \ref{bs}.
\end{proof}

\begin{proof}[Proof  of Corollary \ref{rudin}]
	Suppose  $\H_{\boldsymbol{\lambda}}$ is  bounded  on $\d$.  By Theorem \ref{main00}, $h_{\overline{{\bm \lambda}}}\in \mathbb{D}$. By the definition of Dirichlet norm, the condition  (iii) holds. $(i)\Rightarrow (ii)$ is clear. The implication $(iii)\Rightarrow (i)$ follows from Lemma \ref{vcar} and Theorem \ref{Main00c}. The proof is complete.
\end{proof}

\section{Final remarks}

In this section, we give some remarks about some functions in $\X$ and the action of Hankel matrices on the Bergman space $A^2$.

 As mentioned in \cite[p. 16]{ARSW0},
very little is known about functions in $\X$. Our results in this paper give     a satisfactory  description  of functions  in $\X$ with decreasing Taylor's  sequences  of positive  numbers. More precisely,
suppose ${\bm \lambda}=\{\lambda_n\}^\infty_{n=0}$  is a decreasing sequence of positive  numbers.  From Theorem \ref{main00} and Theorem \ref{H1}, the measure $|h'_{\bm \lambda}(z)|^2 dA(z)$ is a   Carleson measure for $\d$ (i.e. $h_{\bm \lambda}\in \X$) if and only if
    $$
   \sum^\infty_{n=m}n \lambda_n^2=O\left(\frac{1}{\log (m+2)}\right).
   $$
 By Theorem \ref{Main00c} and Theorem \ref{H2}, the measure $|h'_{\bm \lambda}(z)|^2 dA(z)$ is a vanishing   Carleson measure for $\d$ (i.e. $h_{\bm \lambda}\in \X_0$) if and only if
    $$
   \sum^\infty_{n=m}n \lambda_n^2=o\left(\frac{1}{\log (m+2)}\right).
   $$

 Suppose $\boldsymbol{\lambda}=\{\lambda_n\}_{n\in\mathbb{N}}$ is a sequence of complex numbers.
		Let $(X_n)_{n\in\mathbb{N}}$ be a sequence of i.i.d. real random variables with $\mathbb{E}[X_n]=0$ and $\mathbb{E}[X_n^4]<\infty$.
	If $h_{\bm \lambda}\in\mathcal{D}$,   Theorem \ref{random} and Theorem \ref{Main00c} yield that
$$
\mathbb{P}(h_{{\overline{{\bm \lambda}}, \omega}}\in \X)=\mathbb{P}(h_{\overline{{\bm \lambda}}, \omega}\in \X_0)=1,
$$
 where   $h_{\overline{{\bm \lambda}}, \omega}(z)=\sum_{n=0}^\infty X_n\overline{\lambda_n} z^n$.

Next  we consider   Hankel type operators  $\H_{\boldsymbol{\lambda}}$  on the Bergman space $A^2$ which  is also a Hilbert space of analytic functions on $\D$  and it is equipped  with the inner product
$$
\langle f, g\rangle_{A^2}=\int_\D f(z) \overline{g(z)} dA(z).
$$
From \cite[p. 349]{D1}, $(A^2)^*\cong \d$ and $\d^*=A^2$ under the  pairing
\begin{equation}\label{pair11}
\langle f, g \rangle= \sum^\infty_{k=0}a_k b_k,
\end{equation}
where   $f(z)=\sum^\infty_{k=0}a_kz^k$  and $g(z)=\sum^\infty_{k=0}b_k z^k$. For a sequence  $\boldsymbol{\lambda}=\{\lambda_n\}_{n\in\mathbb{N}}$  of complex numbers, it  is easy to see
$$
\langle \H_{\bm \lambda} f, g \rangle=\langle f,  \H_{\bm \lambda} g \rangle.
$$
Consequently, $\H_{\bm \lambda}$ is bounded (resp. compact) on $\d$ if and only if it is bounded (resp. compact) on $A^2$.
If we replace the pairing  (\ref{pair11}) by the Cauchy pairing
$$
(f, g )= \sum^\infty_{k=0}a_k\overline{ b_k}.
$$
Then
$$
( \H_{\bm \lambda} f, g )=( f,  \H_{\overline{\bm \lambda}} g ).
$$
Hence we also  get that $\H_{\bm \lambda}$ is bounded (resp. compact) on $\d$ if and only if $\H_{\overline{\bm \lambda}}$ is bounded (resp. compact) on $A^2$.

\vspace{0.1truecm}
\vspace{0.1truecm}
\noindent{\text{\bf Acknowledgement.}}
The work was supported by  National Natural Science Foundation of China (No. 12231005 and No. 12271328),
Guangdong Basic and Applied Basic Research Foundation (No. 2022A1515012117), China Postdoctoral Science Foundation (No. 2023TQ0074) and National Natural Science Foundation of of Chongqing (No. CSTB2022BSXM-JCX0088 and 2022NSCQ-MSX0321).

\vskip 8mm
\end{document}